\newtheorem{thm}{Theorem}[section]
\newtheorem{cor}[thm]{Corollary}
\newtheorem{lemma}[thm]{Lemma}
\theoremstyle{definition}
\newtheorem{defn}[thm]{Definition}
\theoremstyle{remark}
\newtheorem{rem}[thm]{Remark}
\numberwithin{equation}{section}
\newtheorem*{thma}{Theorem A}
\newtheorem*{thmb}{Theorem B}
\newcommand{\Hess}{\operatorname{Hess}}
\newcommand{\grad}{\operatorname{grad}}
\newcommand{\wf}{\widetilde{f}}
\newcommand{\noi}{\noindent}
\newcommand{\sm}{\smallskip}
\newcommand{\me}{\medskip}
\newcommand{\bi}{\bigskip}
\newcommand{\RR}{\text{Re}}
\newcommand{\ra}{\rightarrow}
\def\D{{\mathbb D}}
\def\R{{\mathbb R}}
\def\S{\mathcal{S}}
\def\E{\mathcal{E}_{\wf}}
\newcommand{\sg}{\sigma}
\newcommand{\dl}{\delta}
\begin{document}

\title[Injectivity of minimal immersions]{Injectivity of minimal immersions and homeomorphic extensions to space}
\author{Martin Chuaqui}
\thanks{The
author was partially supported by Fondecyt Grant  \#1150115.
\endgraf  {\sl Key words:} .
\endgraf {\sl 2000 AMS Subject Classification}. Primary: 53A10, 53A30;\,
Secondary: 30C35.}
%


\begin{abstract}
We study a recent general criterion for the injectivity of the conformal immersion of a
Riemannian manifold into higher dimensional Euclidean space, and show how it gives rise to important conditions
for Weierstrass-Ennerper lifts defined in the unit disk $\mathbb{D}$ endowed with a conformal metric.
Among the corollaries, we obtain a Becker type
condition and a sharp condition depending on the Gaussian curvature and the diameter for an immersed
geodesically convex minimal disk in $\mathbb{R}^3$ to be embedded. Extremal configurations for the criteria are
also determined, and can only occur on a catenoid. For  non-extremal configurations, we establish
fibrations of space by circles in domain and range that give a geometric analogue of the Ahlfors-Weill extension.

\end{abstract}
\maketitle

\section{Introduction}

In recent years, several criteria have been derived for the injectivity of conformal immersions of planar domains
into higher dimensional Euclidean spaces. Important particular cases consider Weierstrass-Enneper lifts of harmonic mappings
and holomorphic immersions into $\mathbb{C}^n$ \cite{cdo:harmonic lift}, \cite{cdo:holomorphic}. The criteria represent
extensions of the classical Nehari theory for homomorphic mappings in one complex variable \cite{Ne}, and are made possible
through appropriate generalizations of the notion of a Schwarzian derivative. It is interesting to observe that the new criteria
do not depend alone on the size of the generalized (conformal) Schwarzian, because the second fundamental form of the immersed surface must also
be taken into account. A key ingredient in this development has been Ahlfors' definition of a Schwarzian derivative for
parametrized curves in Euclidean spaces, in particular, in connection with the injectivity criterion found in
\cite{chuaqui-gevirtz}. This one-dimensional operator  brings in  both the conformal Schwarzian as well
as the the second fundamental form. In \cite{stowe:a-d}, the author introduces the  more general  {\it Ahlfors derivative} for conformal immersions,
which combines Ahlfors'
Schwarzian for curves and the conformal Schwarzian.  Corollary 12,
on which we will concentrate,  represents
one of the most general formulations of a criterion for the inyectivity of a conformal immersion
of a Riemannian manifold into Euclidean space, and
we refer the reader to the paper for other interesting issues.

Our interest is the study of Corollary 12 when the Riemannian manifold takes the form of the unit disk
$\D$ endowed with a conformal metric, and when the immersion is a Weierstrass-Enneper lift. Suitable  choices of
conformal metrics render, among other, generalizations of conditions by Ahlfors \cite{AH},  by Becker \cite{Be},
and by Epstein \cite{Ep1}. Moreover, it gives way to a sharp condition depending just on the Gaussian curvature and the diameter for an immersed
geodesically convex minimal disk to be embedded.

As in the classical case, two additional elements appear of interest after injectivity has been established, namely, boundary behavior
of the lift and possible homeomorphic or quasiconformal extensions to space. These issues have been addressed before by considering
a  real-valued function associated in a canonical way to the lift  that measures up the conformal factor of the immersion
with that of the metric \cite{AW}. We will
offer a proof of Corollary 12 in the context described by appealing entirely to Ahlfors' Schwarzian for curves,
showing, in passing, a crucial convexity property of the canonical function. A continuous extension to the closed disk together with the analysis 
when injectivity can be lost at the boundary
will follow. We will apply ideas
developed in \cite{AW} to define a homeomorphic extension of the lift to the entire space, as a spatial
analogue of the Ahlfors-Weill construction.

\sm
The paper is organized as follows. In the remainder of the Introduction we give a brief account of the
main facts about harmonic mappings and Weierstrass-Enneper lifts. In Section 2, we lay out the background material
on the conformal Schwarzian that applies both for the lift and for a conformal metric
in $\D$, making the connection with Ahlfors' derivative. Section 3 makes a summary
of Ahlfors' Schwarzian for curves and the injectivity criterion derived in \cite{chuaqui-gevirtz}. In Section 4 we state and prove
our main result, and draw various corollaries. The analysis based on Sturm comparison and 
the required regularity properties of the geodesics near $\partial\D$ are presented in
Section 5. Extremal lifts for the conditions are studied in Section 6 and the criterion involving geodesically convex minimal disk is established here. In the final section we describe the procedure
that yields the homeomorphic extension to 3-space.

\sm

     A planar harmonic mapping is a complex-valued harmonic function $f(z)$, $z=x+iy$,
defined on some domain $\Omega\subset\mathbb{C}$.  If $\Omega$ is simply
connected, the mapping has a canonical decomposition $f=h+\overline{g}$,
where $h$ and $g$ are analytic in $\Omega$ and $g(z_0)=0$ for some specified
point $z_0\in \Omega$.  The mapping $f$ is locally univalent if and only if
its Jacobian $|h'|^2 - |g'|^2$ does not vanish.  It is said to be
orientation-preserving if $|h'(z)|>|g'(z)|$ in $\Omega$, or equivalently if
$h'(z)\neq0$ and the dilatation $\omega=g'/h'$ has the property
$|\omega(z)|<1$ in $\Omega$.

     According to the Weierstrass--Enneper formulas, a harmonic mapping
$f=h+\overline{g}$ with $|h'(z)|+|g'(z)|\neq0$ lifts locally to map into
a minimal surface, $\Sigma$, described by conformal parameters if and only if
its dilatation $\omega=q^2$, the square of a meromorphic function $q$.
The Cartesian coordinates $(U,V,W)$ of the surface are then given by
$$
U(z)=\text{Re}\{f(z)\}\,,\quad
V(z)=\text{Im}\{f(z)\}\,,\quad
W(z)= 2\,\text{Im}\left\{\int_{z_0}^z h'(\zeta)q(\zeta)\,d\zeta\right\}\,.
$$
We use the notation
$$
\wf(z) = \bigl(U(z),V(z),W(z)\bigr)
$$
for the lifted mapping of $\Omega$ into $\Sigma$.  The height of the
surface can be expressed more symmetrically as
$$
W(z)= 2\,\text{Im}\left\{\int_{z_0}^z
\sqrt{h'(\zeta)g'(\zeta)}\,d\zeta\right\}\,,
$$
since a requirement equivalent to $\omega=q^2$ is that $h'g'$ be the
square of an analytic function.  The first fundamental form of the
surface is $ds^2=e^{2\sigma}|dz|^2$, where the conformal factor is
$$
e^\sigma = |h'|+|g'|\,.
$$
The Gauss curvature of the surface at a point $\wf(z)$ for
which $h'(z)\neq0$ is
\begin{equation}
K = - e^{-2\sigma}\Delta \sigma
= - \frac{4|q'|^2}{|h'|^2(1+|q|^2)^4}\,,  
\label{eq:curvature}
\end{equation}
where $\Delta$ is the Laplacian operator.  Further information about
harmonic mappings and their relation to minimal surfaces can be found
in \cite{duren:harmonic}.

   For a harmonic mapping $f=h+\overline{g}$ with $|h'(z)|+|g'(z)|\neq0$,
whose dilatation is the square of a meromorphic function, we have
defined \cite{cdo:harmonic schwarzian} the {\em Schwarzian derivative} by the formula
\begin{equation}
\S f = 2\bigl(\sigma_{zz} - \sigma_z^2\bigr)\,, 
\label{eq:harmonic-schwarzian}
\end{equation}
where
$$
\sigma_z = \frac{\partial\sigma}{\partial z}
= \frac12 \left(\frac{\partial\sigma}{\partial x}
- i \frac{\partial\sigma}{\partial y}\right)\,, \qquad z = x+iy\,.
$$
Some background for this definition is discussed in Section 2.
With $h'(z)\neq0$ and $g'/h'=q^2$, a calculation ({\it cf}. \cite{cdo:harmonic schwarzian})
produces the expression
$$
\S f = \S h +\frac{2\overline{q}}{1+|q|^2}
\left(q'' - \frac{q'h''}{h'}\right) -4\left(\frac{q'\overline{q}}{1+|q|^2}
\right)^2\,.
$$
As observed in \cite{cdo:harmonic schwarzian}, the formula remains valid if $\omega$ is
not a perfect square, provided that neither $h'$ nor $g'$ has a simple
zero.

     It must be emphasized that we are not requiring our harmonic
mappings to be locally univalent. In other words, the Jacobian need
not be of constant sign in the domain $\Omega$.  The orientation of
the mapping may reverse, corresponding to a folding in the associated
minimal surface.  It is also possible for the minimal surface to
exhibit several sheets above a point in the $(U,V)$--plane.  Thus the
lifted mapping $\wf$ may be univalent even when the underlying
mapping $f$ is not.

\section{Conformal Schwarzian and Ahlfors' Derivative}

In this section, we present the definition of the conformal Schwarzian that applies to immersions and to
conformal metrics. We will also present its relation to the Ahlfors derivative introduced in \cite{stowe:a-d}, when dealing with
Weierstrass-Enneper lifts of harmonic mappings.
The use of Ahlfors' Schwarzian for curves in the proof of Theorem 4.1 below avoids the need to
consider a Schwarzian derivative of harmonic mappings {\it relative} to conformal metrics in $\D$.
Nevertheless, the first term on the left-hand side in (\ref{injectivity}) below, indeed corresponds to the Schwarzian
of $f$ relative to the conformal metric in $\D$, making the connections with Corollary 12 in \cite{stowe:a-d}.

Furthermore, despite the apparent Euclidean nature of Ahlfors' Schwarzian, the chain rule and the natural parametrizations
of geodesics in the conformal geometry lead to the required lower bounds for the Hessian of the canonical function
{\it relative to the conformal metric} that are
of central use in Section 5 and 7.

\sm
The definition of conformal Schwarzian and its properties are suggested by the classical case, and have analogues there, but the generalization
must be framed in the terminology of differential geometry. We refer to \cite{os:sch} for the higher dimensional setting
and to \cite{co:extremal} for applications of convexity in 2 dimensions, similar to what we will do here
for harmonic mappings.

Let $\mathbf{g}$ be a Riemannian metric on the disk $\Bbb D$. We may assume that $\mathbf{g}$ is conformal to the Euclidean
metric, $\mathbf{g}_0=dx\otimes dx +dy \otimes dy= |dz|^2$. Let $\psi$ be a smooth function on $\Bbb D$ and form the symmetric 2-tensor
\begin{equation}
\Hess_{\mathbf{g}}(\psi) - d\psi \otimes d\psi.
\label{eq:Hessian}
\end{equation}
Here $\Hess$ denotes the Hessian operator. For example, if $\gamma(s)$ is an arc-length parametrized geodesic for $\mathbf{g}$, then
\[
\Hess_{\mathbf{g}}(\psi)(\gamma',\gamma') = \frac{d^2}{ds^2}(\psi\circ \gamma)\,.
\]
The Hessian depends on the metric, and since we will be changing metrics we indicate this dependence by the subscript $\mathbf{g}$.

With some imagination the tensor \eqref{eq:Hessian} begins to resemble a Schwarzian; among other occurrences in differential geometry,
it arises (in  2 dimensions) if one differentiates the equation that relates the geodesic curvatures of a curve for two conformal metrics.
Such a curvature formula is a classical interpretation of the Schwarzian derivative, see \cite{os:sch} and \cite{cdo:curvature}. The
trace of the tensor is the function
\[
\frac{1}{2}(\Delta_\mathbf{g} \psi - ||\grad_\mathbf{g}\psi||_\mathbf{g}^2),
\]
where again we have indicated by a subscript that the Laplacian, gradient and norm all depend on $\mathbf{g}$. It turns out to be most
convenient to work with a traceless tensor when generalizing the Schwarzian, so we subtract off this function times the metric
$\mathbf{g}$ and define the \textit{Schwarzian tensor} to be the symmetric, traceless, 2-tensor
\[
B_\mathbf{g}(\psi)={\Hess}_\mathbf{g}(\psi)-d\psi\otimes d\psi-\frac{1}{2}(\Delta_\mathbf{g} \psi-||\grad_\mathbf{g} \psi||^2)\mathbf{g}\,.
\label{eq:B-psi}
\]
Working in standard Cartesian coordinates one can represent
$B_{\mathbf{g}}(\psi)$ as a symmetric, traceless $2\times 2$ matrix, say of the form
\[
\begin{pmatrix}
a & -b \\
-b & -a
\end{pmatrix}\,.
\]
Further identifying such a matrix with the complex number $a+bi$ then allows us to associate the tensor $B_{\mathbf{g}}(\psi)$ with $a+bi$.

At each point $z\in\Bbb D$, the expression $B_\mathbf{g}(\psi)(z)$ is a bilinear form on the tangent space at $z$, and so its norm is
\[
||B_\mathbf{g}(\psi)(z)||_\mathbf{g} =\sup_{X,Y}B_\mathbf{g}(\psi)(z)(X,Y)\,,
\]
where the supremum is over unit vectors in the metric $\mathbf{g}$. If we compute the tensor with respect to the Euclidean metric
and make the identification with a complex number as above, then
\[
||B_\mathbf{g_0}(\psi)(z)||_\mathbf{g_0} = |a+bi|\,.
\]

Now, if $f$ is analytic and locally univalent in $\Bbb D$, then it is a conformal mapping of $\Bbb D$ with the metric $\mathbf{g}$
into $\Bbb C$ with the Euclidean metric. The pullback $f^*\mathbf{g}_0$ is a metric on ${\Bbb D}$ conformal to $\mathbf{g}$,
say $f^*\mathbf{g}_0 = e^{2\psi}\mathbf{g}$, and  the (conformal) Schwarzian of $f$ is now defined to be
\[
\S_\mathbf{g} f = B_\mathbf{g}(\psi)\,.
\]
If we take $\mathbf{g}$ to be the Euclidean metric then $\psi = \log|f'|$. Computing $B_{\mathbf{g}_0}(\log|f'|)$ and writing
it in matrix form as above results in
\[
B_{\mathbf{g}_0}(\log|f'|)=\left( \begin{array}{rr} \RR\,\S f & -{\rm Im}\,\S f \\ -{\rm Im}\,\S f & -\RR\,\S f
\end{array}\right) \, ,
\]
where $\S f$ is the classical Schwarzian derivative of $f$. In this way we identify $B_{\mathbf{g}_0}(\log|f'|)$ with  $\S f$.

Next, if $f=h+\overline{g}$ is a harmonic mapping of $\Bbb D$ and $\sigma = \log(|h'|+|g'|)$ is the conformal factor associated
with the lift $\wf$, we put
\[
\S f = \S_{\mathbf{g}_0} \wf = B_{\mathbf{g}_0}(\sigma).
\]
Calculating this out and making the identification of the generalized Schwarzian with a complex number produces
\[
B_{\mathbf{g}_0}(\sigma) = 2(\sigma_{zz} - \sigma_z^2)\,,
\]
which is the definition of $\S f$ given in \eqref{eq:harmonic-schwarzian}.

\medskip

In this context, the Ahlfors derivative $\mathcal{A}f$ relative to $\mathbf{g}_0$ introduced in \cite{stowe:a-d}  is related to the conformal Schwarzian by
the equation
$$\mathcal{A}f=\S f+\frac12|K\circ \wf|\mathbf{g}_0 \, .$$
The definition of $\mathcal{A}f$ gives a two-tensor for arbitrary conformal immersions, following partly the conformal Schwarzian, but it incorporates
information of the second fundamental form of the target when codimension exists. It gives back Ahlfors Schwarzian for curves (presented in the next section) when
the domain manifold is an interval, and vanishes for M\"obius transformations of $\mathbb{R}^n$. As pointed out by the author in \cite{stowe:a-d},
it is interesting that no such operator will exhibit in addition a general chain rule $\mathcal{A}(G\circ F)=\mathcal{A}F+F^*(\mathcal{A}G)$,
although the operator introduced will comply with this chain rule in many situations. We refer the reader to \cite{stowe:a-d} for the analysis leading
to the definition and for further details.

\section{Ahlfors' Schwarzian}

Ahlfors \cite{Ah1} introduced a notion of Schwarzian derivative
for mappings of a real interval into ${\Bbb R}^n$ by formulating
suitable analogues of the real and imaginary parts of $\S f$ for
analytic functions $f$.  A simple calculation shows that
$$
\text{Re}\{\S f\} = \frac{\text{Re}\{f'''\overline{f'}\}}{|f'|^2}
-3 \,\frac{\text{Re}\{f''\overline{f'}\}^2}{|f'|^4} + \frac32
\frac{|f''|^2}{|f'|^2}\,.
$$
For mappings $\varphi : (a,b) \rightarrow {\Bbb R}^n$ of class $C^3$ with
$\varphi'(x)\neq0$, Ahlfors defined the analogous expression
\begin{equation}
\label{eq:S1}
\S_1\varphi = \frac{ \varphi'''\cdot\varphi'}{|\varphi'|^2}
- 3\frac{(\varphi''\cdot\varphi')^2}{|\varphi'|^4}
+ \frac32\frac{|\varphi''|^2}{|\varphi'|^2}\,,
\end{equation}
where $\cdot$ denotes the Euclidean inner product
and now $|\mathbf{x}|^2=\mathbf{x}\cdot\mathbf{x}$ for $\mathbf{x}\in
{\Bbb R}^n$.  Ahlfors also defined a second expression analogous to
$\text{Im}\{\S f\}$, but this is not relevant to the present
discussion.

Ahlfors' Schwarzian is invariant under postcomposition with M\"obius
transformations; that is, under every composition of rotations,
magnifications, translations, and inversions in ${\Bbb R}^n$.  Only its
invariance under inversion
$$
\mathbf{x} \mapsto \frac{\mathbf{x}}{|\mathbf{x}|^2}\,, \qquad
\mathbf{x}\in{\Bbb R}^n\,,
$$
presents a difficulty; this can be checked by straightforward but tedious
calculation.  It should also be noted that $\S_1$ transforms as expected
under change of parameters.  If $x=x(t)$ is a smooth function with
$x'(t)\neq0$, and $\psi(t)=\varphi(x(t))$, then
\begin{equation}\label{reparam}
\S_1\psi(t) = \S_1\varphi(x(t))\,x'(t)^2 + \S x(t)\,.
\end{equation}

With the notation $v=|\varphi'|$,  and based on the Frenet--Serret formulas, it was shown in \cite{chuaqui-gevirtz}
that
\begin{equation}
\S_1\varphi = \left(\frac{v'}{v}\right)' - \frac12\left(\frac{v'}{v}\right)^2
+ \frac12 v^2 k^2 = \S(s) + \frac12 v^2 k^2\,,
\label{eq:S1-and-S}
\end{equation}
where $s=s(x)$ is the arc-length of the curve and $k$ is its curvature.  Our proof of Theorem 1 will be based on
the following result found in \cite{chuaqui-gevirtz}.
\begin{thma} Let $p(x)$ be a continuous function such that the
differential equation $u''(x)+p(x)u(x)=0$ admits no nontrivial solution
$u(x)$ with more than one zero in $(-1,1)$. Let $\varphi : (-1,1)\rightarrow
{\Bbb R}^n$ be a curve of class $C^3$ with tangent vector
$\varphi'(x)\neq 0$.  If $\S_1\varphi(x)\leq2p(x)$, then $\varphi$ is
univalent.
\end{thma}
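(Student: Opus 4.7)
The plan is to argue by contradiction. Suppose $\varphi(a)=\varphi(b)$ for some $a<b$ in $(-1,1)$, chosen minimally so that $\varphi(x)\ne\varphi(a)$ on $(a,b)$, and translate $\mathbb{R}^n$ so that $\varphi(a)=\varphi(b)=0$. I would then build a scalar function on $[a,b]$ that vanishes at both endpoints, stays positive in between, and satisfies the Nehari-type inequality $u''+pu\geq 0$ on $(a,b)$; a Wronskian/Sturm comparison against the disconjugacy of $u''+pu=0$ will yield the contradiction.

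The first ingredient is a pointwise identity following from \eqref{eq:S1-and-S}: for any $C^3$ curve $\psi$ with $\psi'\ne 0$, setting $U:=|\psi'|^{-1/2}$ and $v=|\psi'|$, the relation $U'/U=-\tfrac12 v'/v$ together with a direct calculation gives
\[
-\frac{2U''}{U}=\S_1\psi-\tfrac12 v^2 k^2 .
\]
Since $v^2 k^2\geq 0$, this yields $U''+\tfrac12\S_1\psi\cdot U\geq 0$, and the hypothesis $\S_1\psi\leq 2p$ upgrades this to the supersolution inequality $U''+pU\geq 0$.

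To force zeros at $a$ and $b$ I would postcompose with the inversion $\mathbf{x}\mapsto \mathbf{x}/|\mathbf{x}|^2$, forming $\tilde\varphi:=\varphi/|\varphi|^2$ on $(a,b)$. A short conformality computation gives $|\tilde\varphi'|=|\varphi'|/|\varphi|^2$, so the associated auxiliary function is $\tilde U=|\varphi|\cdot|\varphi'|^{-1/2}$. By the M\"obius invariance of $\S_1$ under postcomposition, $\S_1\tilde\varphi\leq 2p$ as well, and the previous step then delivers $\tilde U''+p\tilde U\geq 0$ on $(a,b)$. Because $|\varphi(x)|\sim|\varphi'(a)|(x-a)$ near $a$ and similarly near $b$, the function $\tilde U$ extends continuously to $[a,b]$ with $\tilde U(a)=\tilde U(b)=0$, is positive on $(a,b)$, and has nonzero one-sided derivatives $\tilde U'(a^+)=|\varphi'(a)|^{1/2}>0$ and $\tilde U'(b^-)=-|\varphi'(b)|^{1/2}<0$.

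To close the argument, let $w$ be the nontrivial solution of $w''+pw=0$ with $w(a)=0$, $w'(a)>0$; by disconjugacy, $w>0$ on $(a,1)$. The Wronskian $W=\tilde U w'-\tilde U' w$ satisfies $W(a^+)=0$ and $W'=-(\tilde U''+p\tilde U)\,w\leq 0$ on $(a,b)$, so $W\leq 0$ on $[a,b]$; yet $W(b^-)=|\varphi'(b)|^{1/2}\,w(b)>0$, which is the desired contradiction. The main technical difficulty I anticipate is the boundary regularity of $\tilde U$: because $|\varphi|$ is only Lipschitz at its zeros, $\tilde U$ is merely $C^0$ on $[a,b]$ and not $C^1$ at the endpoints, so the Wronskian comparison has to be carried out with one-sided limits whose non-vanishing must be extracted carefully from the hypothesis $\varphi'(a),\varphi'(b)\ne 0$.
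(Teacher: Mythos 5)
Your proof is correct and follows essentially the same route as the source of this statement: the paper only quotes Theorem A from \cite{chuaqui-gevirtz} without proof, and the argument there is precisely yours, namely the decomposition $\S_1\varphi=\S s+\tfrac12 v^2k^2$ giving $U''+pU\geq 0$ for $U=|\varphi'|^{-1/2}$, M\"obius invariance of $\S_1$ under the inversion centered at the putative double point, and a Sturm--Wronskian comparison of the resulting positive supersolution vanishing at both endpoints against the disconjugate equation $u''+pu=0$. Your only (harmless) overstatement is the regularity worry at the endpoints: since $\varphi'(a),\varphi'(b)\neq 0$, the function $\tilde U=|\varphi|\,|\varphi'|^{-1/2}$ is in fact one-sidedly $C^1$ at $a$ and $b$, with $\tilde U'(a^+)=|\varphi'(a)|^{1/2}$ and $\tilde U'(b^-)=-|\varphi'(b)|^{1/2}$, so the one-sided limits of the Wronskian exist exactly as you use them.
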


If the function $p(x)$ of Theorem A is even,
then the solution $u_0$ of the differential equation $u''+pu=0$ with initial conditions $u_0(0)=1$
and $u_0'(0)=0$ is also even, and therefore $u_0(x)\neq0$ on $(-1,1)$,
since otherwise it would have at least two zeros.  Thus the function
\begin{equation}
\Phi(x) = \int_0^x u_0(t)^{-2}\,dt\,, \qquad -1 < x < 1\,,
\label{eq:Phi}
\end{equation}
is well defined and has the properties $\Phi(0)=0$, $\Phi'(0)=1$,
$\Phi''(0)=0$, $\Phi(-x)=-\Phi(x)$.  The standard method of reduction
of order produces the independent solution $u=u_0\Phi$ to $u''+pu=0$,
and so $\S \Phi=2p$. Note also that $\S_1\Phi=\S \Phi$, since
$\Phi$ is real-valued.  Thus $\S_1\Phi=2p$.

The next theorem, again to be found in \cite{chuaqui-gevirtz}, asserts that
the mapping $\Phi: (-1,1)\rightarrow {\Bbb R}\subset{\Bbb R}^n$ is extremal
for Theorem A if $\Phi(1)=\infty$, and that every extremal mapping
$\varphi$ is then a M\"obius postcomposition of $\Phi$.

\begin{thmb} Let $p(x)$ be an even function with the properties
assumed in Theorem A, and let $\Phi$ be defined as above.  Let
$\varphi: (-1,1)\rightarrow {\Bbb R}^n$ satisfy $\S_1\varphi(x)\leq2p(x)$
and have the normalization $\varphi(0)=0$, $|\varphi'(0)|=1$, and
$\varphi''(0)=0$.  Then $|\varphi'(x)|\leq\Phi'(|x|)$ for $x\in(-1,1)$,
and $\varphi$ has an extension to the closed interval $[-1,1]$ that is
continuous with respect to the spherical metric.  Furthermore, there are
two possibilities, as follows.

\sm
\noi $(i)$  If $\Phi(1)<\infty$, then $\varphi$ is univalent in $[-1,1]$
and $\varphi([-1,1])$ has finite length.

\sm
\noi $(ii)$ If $\Phi(1)=\infty$, then either $\varphi$ is univalent in
$[-1,1]$ or $\varphi=R\circ\Phi$ for some rotation $R$ of ${\Bbb R}^n$.
\end{thmb}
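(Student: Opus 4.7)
The approach is to reduce the pointwise hypothesis $\S_1\varphi \leq 2p$ to a Sturm comparison on a scalar ODE. Setting $v = |\varphi'|$ and invoking identity (\ref{eq:S1-and-S}), the hypothesis reads
\[
\left(\frac{v'}{v}\right)' - \frac{1}{2}\left(\frac{v'}{v}\right)^2 + \frac12 v^2 k^2 \leq 2p,
\]
and the substitution $u = v^{-1/2}$ converts this into the linear differential inequality $u'' + pu \geq \tfrac14 v^2 k^2\, u \geq 0$. The normalizations $|\varphi'(0)|=1$ and $\varphi''(0)=0$ give $u(0)=1$, $u'(0)=0$, matching the initial conditions of $u_0$. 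Since the disconjugacy hypothesis on $p$ forces $u_0 > 0$ on $(-1,1)$, the Wronskian $W = u' u_0 - u u_0'$ satisfies $W' = (u''+pu)u_0 \geq 0$ with $W(0)=0$, so $(u/u_0)' \geq 0$ on $[0,1)$ and $u \geq u_0$ there. This rearranges to $v = u^{-2} \leq u_0^{-2} = \Phi'$, and the symmetric argument on $(-1,0]$ completes the bound $|\varphi'(x)| \leq \Phi'(|x|)$.

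For the continuous extension, integrating yields $\int_0^x |\varphi'|\,dt \leq \Phi(x)$. In case (i), where $\Phi(1)<\infty$, the curve has finite Euclidean length at most $2\Phi(1)$, so $\varphi$ is Cauchy at each endpoint and extends continuously in the Euclidean (hence spherical) metric. In case (ii), when $\Phi(1)=\infty$, I would postcompose with a M\"obius transformation $T$ of $\R^n$ chosen so that $T\circ\varphi$ has bounded image; since $\S_1$ is M\"obius invariant, the same inequality persists for $T\circ\varphi$, which therefore has finite length and continuous extension, and the spherical limits pull back to spherical limits of $\varphi$ at $\pm 1$.

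Theorem A already delivers univalence on the open interval $(-1,1)$, so any failure of univalence on $[-1,1]$ must involve a boundary coincidence. The rigidity analysis proceeds from the monotonicity of $W$: if $W \equiv 0$ on a subinterval, then both $u''+pu \equiv 0$ and $v^2 k^2 \equiv 0$ hold there, so the curvature $k$ vanishes and the image lies along the line through $0$ in direction $\varphi'(0)$; the parametrization must simultaneously satisfy $u''+pu=0$ with the initial data of $u_0$, forcing $\varphi = R\circ\Phi$ on that subinterval for the rotation $R$ sending $e_1$ to $\varphi'(0)$. Propagation to all of $(-1,1)$ follows by re-applying the Wronskian comparison past the interval with matched $C^3$ data and appealing to the spherical boundary identification. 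In case (ii), a boundary collision of $\varphi$ in the spherical sense is precisely matched by $R\circ\Phi$ (both of whose endpoints accumulate at $\infty$ when $\Phi(1)=\infty$), yielding $\varphi = R\circ\Phi$; in case (i), $R\circ\Phi$ is itself univalent on $[-1,1]$, so the extremal branch is self-excluded and $\varphi$ must be univalent. The main obstacle is making rigorous the link between a boundary spherical coincidence and the interior Wronskian equality required to trigger the rigidity, which demands the careful endpoint analysis carried out in \cite{chuaqui-gevirtz}; once this link is in place, the dichotomy in (ii) and the unconditional univalence in (i) both follow.
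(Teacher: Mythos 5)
Your first step is correct and is the standard route: with $v=|\varphi'|$, $u=v^{-1/2}$, the identity (\ref{eq:S1-and-S}) gives $u''+pu\ge \tfrac14 v^2k^2u\ge 0$, the normalization gives $u(0)=1$, $u'(0)=0$, and the Wronskian $W=u'u_0-uu_0'$ is nondecreasing with $W(0)=0$, whence $u\ge u_0$ and $|\varphi'(x)|\le\Phi'(|x|)$; the finite-length extension in case (i) follows. (Note that the paper itself does not prove Theorem B but quotes it from \cite{chuaqui-gevirtz}, so your argument can only be measured against that source.) However, two essential pieces are missing. First, your case (ii) extension argument does not work as stated: you neither show that a M\"obius map $T$ with $T\circ\varphi$ bounded exists, nor would boundedness help, since $T\circ\varphi$ loses the normalization $\varphi(0)=0$, $|\varphi'(0)|=1$, $\varphi''(0)=0$ on which the comparison $u\ge u_0$ rests, and with $\Phi(1)=\infty$ that comparison gives no length bound anyway. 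The workable argument stays with $\varphi$ itself: since $W$ is nondecreasing, toward the endpoint $1$ either $W\equiv 0$ on $[0,1)$, in which case $u''+pu\equiv 0$ forces $k\equiv 0$ and $v=\Phi'$, so the tail is a straight ray tending to a finite point or to $\infty$ according as $\Phi(1)<\infty$ or $\Phi(1)=\infty$; or $W\ge c>0$ on some $[x_1,1)$, in which case $u/u_0\ge 1+c\left(\Phi-\Phi(x_1)\right)$ and
\[
\int_{x_1}^{1}v\,dx\;\le\;\int_{0}^{\Phi(1)-\Phi(x_1)}\frac{d\tau}{(1+c\tau)^2}\;\le\;\frac1c<\infty ,
\]
so the tail has finite length even when $\Phi(1)=\infty$. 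This dichotomy, not a M\"obius shift, is what yields the spherically continuous extension.

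Second, and more seriously, the heart of the theorem — that a failure of injectivity on $[-1,1]$ forces $W\equiv 0$ on both sides and hence $\varphi=R\circ\Phi$ — is exactly the step you leave open ("the main obstacle"), deferring it to \cite{chuaqui-gevirtz}. What you actually prove is the easy converse direction ($W\equiv 0$ implies $\varphi$ is a normalized straight ray), and your case (i) conclusion is then deduced circularly from the unproved dichotomy. A complete proof must rule out an endpoint coinciding with an interior point as well as $\varphi(1)=\varphi(-1)$ at a finite point, and the natural move of inverting at the common value destroys the normalization, so the interior equality $W\equiv0$ cannot be triggered by your monotonicity argument alone; one needs the endpoint analysis with general initial data (comparison against general solutions of $u''+pu=0$ together with the disconjugacy hypothesis) carried out in \cite{chuaqui-gevirtz}. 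As it stands, the proposal establishes the derivative bound and, after the repair above, the continuous extension, but not the univalence statement in (i) nor the rigidity alternative in (ii).
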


Note that in case $(ii)$ the mapping $\Phi$ sends both ends of the
interval to the point at infinity and is therefore not univalent in
$[-1,1]$. The role of $\Phi$ as an extremal for the harmonic univalence criterion
\eqref{injectivity} will emerge in the following sections. Two important corollaries
of Theorem B are that a curve $\varphi: (-1,1)\rightarrow {\Bbb R}^n$ satisfying $\S_1\varphi(x)\leq2p(x)$
for which $\varphi(1)=\varphi(-1)$ must take the closed interval $[-1,1]$ to a circle
or a line union the point at infinity, and that $\S_1\varphi\equiv 2p$.

\begin{rem}
A final important observation that will be used in Section 6, is that if
$\S_1\varphi< 2\pi^2/l^2$ on an interval $I$ of length $l$, then $\varphi$
is injective on the closed interval $\overline{I}$. To prove this, we may
assume that $I=(-l/2,l/2)$. Note that $p(x)\equiv \pi^2/l^2$
satisfies the hypothesis in Theorem A. The even solution $u_0$ above is
given by $\cos\left(cx\right)$, with $c=\pi/l$, and corresponding extremal $\Phi(x)=(1/c)\tan(cx)$
for which $\S \Phi=2\pi^2/l^2$.

The estimate $\S_1\varphi< 2\pi^2/l^2$ and Theorem A show that $\varphi$ is injective on $I$, and
Theorem B shows that the extension to $\overline{I}$ must remain injective, for otherwise $\S_1\varphi=2\pi^2/l^2$,
a contradiction.
\end{rem}

\section{Embedded minimal disks}

In this section we shall give a proof of the following criterion for the conformal parametrization of
minimal disks to be injective. This theorem corresponds to Corollary 12 in \cite{stowe:a-d} when the domain manifold $M=\D$
endowed with the conformal metric $\mathbf{g}$.
Follow up results having to do with continuous extension to the closed disk,
extremal configuration, and homeomorphic extension to all 3-space, will be studied in the final sections.

\begin{thm}\label{main result} Let $f$ be a harmonic mapping with dilatation $\omega=q^2$
the square of a meromorphic function in $\D$. Let $\mathbf{g}=e^{2\rho}\mathbf{g}_0$ be
a metric in $\D$ conformal to the Euclidean metric, and suppose that any two points
in $\D$ can be joined by a geodesic in the metric $\mathbf{g}$ of length less that
$\delta$, for some $0<\delta\leq\infty$. If
\begin{equation}\label{injectivity} \left|\S f -2\left(\rho_{zz}-\rho_z^2\right)\right|+e^{2\sigma}|K|
\leq \frac{2\pi^2e^{2\rho}}{\delta^2}+2\rho_{z\bar{z}}  \end{equation}
then the lift $\wf$ is injective in $\D$.
\end{thm}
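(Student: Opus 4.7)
The strategy is to apply Theorem A, in the sharp form supplied by the Remark at the end of Section~3, to the spatial curve $\varphi=\wf\circ\gamma$, where $\gamma$ is a $\mathbf{g}$-geodesic joining a given pair of points in $\D$. Given distinct $z_1,z_2\in\D$, fix such a $\gamma$ of $\mathbf{g}$-length $L<\delta$, parametrized by $\mathbf{g}$-arclength on $[-L/2,L/2]$. Since $\wf$ is conformal from $(\D,\mathbf{g}_0)$ to $\mathbb{R}^3$ with factor $e^\sigma$, the Euclidean speed of $\varphi$ is $v=e^{\sigma-\rho}$. Writing $\psi=\sigma-\rho$ and using that $\gamma$ is a $\mathbf{g}$-geodesic, formula (3.3) becomes
\[
\S_1\varphi = \psi''-\tfrac12(\psi')^2 + \tfrac12 v^2 k^2,\qquad \psi''=\Hess_\mathbf{g}(\psi)(\gamma',\gamma').
\]
The aim is to show $\S_1\varphi\le 2\pi^2/\delta^2$ pointwise along $\gamma$. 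Since $L<\delta$, this yields $\S_1\varphi<2\pi^2/L^2$, so the Remark at the end of Section~3 forces $\varphi$ to be injective on the closed interval $[-L/2,L/2]$, giving $\wf(z_1)\neq\wf(z_2)$.

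The geometric heart of the argument is the evaluation of $v^2 k^2$. Split $k^2=k_g^2+k_n^2$, with $k_g$ the geodesic curvature of $\varphi$ on $\Sigma$ and $k_n$ the normal curvature. Minimality of $\Sigma$ yields $k_n^2\le|K\circ\wf|$. The standard formula for geodesic curvature under a conformal change of metric, together with the vanishing of the $\mathbf{g}$-geodesic curvature of $\gamma$, gives $v\,k_g=d\psi(N)$, where $N$ is the $\mathbf{g}$-unit normal to $\gamma$. The $\mathbf{g}$-orthonormal decomposition of $\nabla_\mathbf{g}\psi$ in the frame $\{\gamma',N\}$ then delivers $v^2k_g^2=\|\nabla_\mathbf{g}\psi\|_\mathbf{g}^2-(\psi')^2$. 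Substituting and recognizing, from the definition of the Schwarzian tensor, that
\[
\psi''-(\psi')^2+\tfrac12\|\nabla_\mathbf{g}\psi\|_\mathbf{g}^2 = B_\mathbf{g}(\psi)(\gamma',\gamma')+\tfrac12\Delta_\mathbf{g}\psi,
\]
one arrives at
\[
\S_1\varphi \le \|B_\mathbf{g}(\psi)\|_\mathbf{g} + \tfrac12\Delta_\mathbf{g}\psi + \tfrac12 e^{2\psi}|K|.
\]

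To reconcile this with the stated hypothesis, a direct calculation from the definitions establishes the conformal identity $B_\mathbf{g}(\psi)=B_{\mathbf{g}_0}(\psi+\rho)-B_{\mathbf{g}_0}(\rho)$. Taking $\psi=\sigma-\rho$ gives $B_\mathbf{g}(\sigma-\rho)=B_{\mathbf{g}_0}(\sigma)-B_{\mathbf{g}_0}(\rho)$, whose complex-number representation is $\S f-2(\rho_{zz}-\rho_z^2)$, with $\mathbf{g}$-norm $e^{-2\rho}|\S f-2(\rho_{zz}-\rho_z^2)|$ (the factor $e^{-2\rho}$ converts $\mathbf{g}_0$-unit vectors to $\mathbf{g}$-unit vectors). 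Using $K=-e^{-2\sigma}\Delta\sigma$ from (1.1) one has $\Delta_\mathbf{g}\sigma=e^{2(\sigma-\rho)}|K|$, and $\Delta_\mathbf{g}\rho=4\rho_{z\bar z}e^{-2\rho}$, so the $\tfrac12\Delta_\mathbf{g}\sigma$ contribution hidden in $\tfrac12\Delta_\mathbf{g}\psi$ exactly combines with the pre-existing $\tfrac12 e^{2\psi}|K|$ to produce a full $e^{2\psi}|K|$, and the remaining $-\tfrac12\Delta_\mathbf{g}\rho$ contributes the $-2\rho_{z\bar z}e^{-2\rho}$. The estimate collapses to
\[
\S_1\varphi \le e^{-2\rho}\!\left[\,|\S f-2(\rho_{zz}-\rho_z^2)|+e^{2\sigma}|K|-2\rho_{z\bar z}\,\right] \le \frac{2\pi^2}{\delta^2},
\]
the last inequality being precisely the hypothesis. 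The main obstacle in this plan is the third step above: the clean identification of $v^2 k_g^2$ with $\|\nabla_\mathbf{g}\psi\|_\mathbf{g}^2-(\psi')^2$, which is what allows the symmetric traceless tensor $B_\mathbf{g}(\psi)$ to emerge after the trace corrections cancel against the remaining gradient-squared term, and then the verification of the conformal identity that links $B_\mathbf{g}(\sigma-\rho)$ to the Euclidean Schwarzians $\S f$ and $2(\rho_{zz}-\rho_z^2)$ appearing in the statement.
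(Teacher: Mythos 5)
Your argument is correct, and its scaffolding is the same as the paper's: restrict $\wf$ to a $\mathbf{g}$-geodesic parametrized by $\mathbf{g}$-arclength, show that Ahlfors' Schwarzian of the resulting space curve is bounded by $e^{-2\rho}\bigl[\,|\S f-2(\rho_{zz}-\rho_z^2)|+e^{2\sigma}|K|-2\rho_{z\bar z}\,\bigr]\le 2\pi^2/\delta^2$, and conclude injectivity along each geodesic from Theorem A (you invoke Remark 3.1, the paper applies Theorem A with $p\equiv\pi^2/\delta^2$ on a slightly larger interval -- an immaterial difference). Where you genuinely diverge is in how the pointwise bound is produced. The paper cites Lemma \ref{lemma:ahl-schwarzian} (the Euclidean-parametrization formula, quoted from the earlier work), then changes to the $\mathbf{g}$-arclength parameter via \eqref{reparam} and computes $\S t(s)$ by hand using the Euclidean form of the geodesic equation $\kappa=\nabla\rho\cdot\hat n$, finally recognizing the leftover terms as $-B_{\mathbf{g}_0}(\rho)(\hat t,\hat t)-2\rho_{z\bar z}$. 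You instead work intrinsically in $\mathbf{g}$ from the outset: \eqref{eq:S1-and-S} with $v=e^{\sigma-\rho}$, the splitting $k^2=k_g^2+k_n^2$ with $k_n^2\le|K|$ by minimality, the conformal-change formula for geodesic curvature (which, since $\gamma$ is a $\mathbf{g}$-geodesic, gives $v^2k_g^2=\norm{\grad_\mathbf{g}\psi}_\mathbf{g}^2-(\psi')^2$ up to an irrelevant sign), and then the cocycle identity $B_\mathbf{g}(\sigma-\rho)=B_{\mathbf{g}_0}(\sigma)-B_{\mathbf{g}_0}(\rho)$ of the Osgood--Stowe tensor to identify the result with $\S f-2(\rho_{zz}-\rho_z^2)$ and the Laplacian terms with $e^{2\sigma}|K|-2\rho_{z\bar z}$. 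In effect you re-derive the $\mathbf{g}$-relative analogue of Lemma \ref{lemma:ahl-schwarzian} rather than citing the Euclidean one and reparametrizing; this makes explicit that the first term in \eqref{injectivity} is the norm of the Schwarzian of $f$ relative to $\mathbf{g}$ (the link to Stowe's Corollary 12 that the paper only remarks on in Section 2), at the price of importing two standard differential-geometric identities (conformal change of geodesic curvature, and the cocycle property of $B$) which you state without proof but which are indeed routine and used correctly, so there is no gap.
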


The proof will be based on showing that under (4.1), the restriction of $\wf$ to
any geodesic is injective. To this end, we state without proof the following variant of Lemma 2  in \cite{cdo:holomorphic}.

\begin{lemma} \label{lemma:ahl-schwarzian}
Let $\wf :\D \rightarrow \Sigma$ be the lift of a harmonic mapping $f$
defined in $\D$. Let $\gamma(t)$ be a Euclidean arc-length
parametrized curve in $\D$ with curvature $\kappa(t)$, and
let $\varphi(t) = \wf(\gamma(t))$ be the corresponding
parametrization of $\Gamma = \wf(\gamma)$ on
$\Sigma.$ Let $V(t)$ be the Euclidean unit
tangent vector field along $\varphi(t)$, given by
\[
V(t) = \frac{\varphi'(t)}{|\varphi'(t)|} \,.
\]
If $I\!I$ stands for the second fundamental form on $\Sigma$ then

\begin{equation}
\S_1\varphi  = {\rm Re}\{\S f({\gamma})({\gamma}')^2\}
+\frac12e^{2\sigma({\gamma})}\left(|K(\varphi)|
+|I\!I(V,V)|^2\right)+
\frac12\kappa^2\,.
\end{equation}
\end{lemma}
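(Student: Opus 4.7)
The plan is to begin from the Frenet-type identity for Ahlfors' Schwarzian recorded in the excerpt as \eqref{eq:S1-and-S}, namely $\S_1\varphi = (v'/v)' - \tfrac{1}{2}(v'/v)^2 + \tfrac{1}{2}v^2 k^2$, with $v = |\varphi'|$ and $k$ the Euclidean curvature of $\varphi$ in $\R^3$. Since $\wf$ is a conformal immersion with conformal factor $e^\sigma$ and $\gamma$ is Euclidean arc-length parametrized, $v = e^{\sigma\circ\gamma}$ and $v'/v = (\sigma\circ\gamma)'$. Setting $\zeta(t) = \gamma'(t)$, so that $|\zeta|=1$ and $\zeta' = i\kappa\zeta$, a direct complex-coordinate computation yields
\[
(\sigma\circ\gamma)' = 2\RR(\sigma_z\zeta), \qquad (\sigma\circ\gamma)'' = 2\RR(\sigma_{zz}\zeta^2) + 2\sigma_{z\bar z} - \kappa\,\II(2\sigma_z\zeta).
\]
Forming $(\sigma\circ\gamma)'' - \tfrac{1}{2}[(\sigma\circ\gamma)']^2$, recognizing $2(\sigma_{zz}-\sigma_z^2) = \S f$, and simplifying produces
\[
(\sigma\circ\gamma)'' - \tfrac{1}{2}[(\sigma\circ\gamma)']^2 = \RR\{\S f(\gamma)(\gamma')^2\} + 2\sigma_{z\bar z} - \kappa\,\II(2\sigma_z\zeta) - \tfrac{1}{2}[\II(2\sigma_z\zeta)]^2.
\]

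Two geometric facts then complete the identification. First, because $\wf$ parametrizes a minimal surface, the curvature formula \eqref{eq:curvature} gives $\Delta\sigma = -e^{2\sigma}K$ with $K\le 0$, so $2\sigma_{z\bar z} = \tfrac{1}{2}\Delta\sigma = \tfrac{1}{2}e^{2\sigma}|K|$. Second, because $\wf$ is an isometry from $(\D, e^{2\sigma}|dz|^2)$ onto $\Sigma$, the curvature vector of $\varphi$ in $\R^3$ decomposes orthogonally into a tangential component whose magnitude is the geodesic curvature $k_g$ of $\gamma$ in the metric $e^{2\sigma}|dz|^2$, and a normal component of magnitude $|I\!I(V,V)|$; hence $k^2 = k_g^2 + |I\!I(V,V)|^2$. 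The standard conformal-change formula $e^\sigma k_g = \kappa - \partial_N\sigma$, together with the direct computation $\partial_N\sigma = -\II(2\sigma_z\zeta)$ for the Euclidean unit normal $N = i\zeta$, then gives
\[
\tfrac{1}{2}e^{2\sigma}k_g^2 = \tfrac{1}{2}\kappa^2 + \kappa\,\II(2\sigma_z\zeta) + \tfrac{1}{2}[\II(2\sigma_z\zeta)]^2.
\]

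Adding this equation and $\tfrac{1}{2}e^{2\sigma}|I\!I(V,V)|^2$ to the earlier display for $(\sigma\circ\gamma)'' - \tfrac{1}{2}[(\sigma\circ\gamma)']^2$, the cross terms $\kappa\,\II(2\sigma_z\zeta)$ and $\tfrac{1}{2}[\II(2\sigma_z\zeta)]^2$ cancel exactly, and the surviving terms assemble into the claimed identity. The main obstacle is purely bookkeeping: keeping sign conventions for the signed curvature and the normal vector consistent so that these cancellations occur, and using $K\le 0$ on a minimal surface to convert the $-K$ produced by the Laplacian into $|K|$. All other ingredients are \eqref{eq:S1-and-S}, the definition of the harmonic Schwarzian $\S f = 2(\sigma_{zz}-\sigma_z^2)$, the minimal-surface curvature formula \eqref{eq:curvature}, and a standard conformal change of geodesic curvature, so the core of the argument is careful algebra with complex derivatives along $\gamma$.
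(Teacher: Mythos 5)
Your computation is correct, and I verified the delicate points: with the convention $\zeta'=i\kappa\zeta$, $N=i\zeta$ one indeed gets $\partial_N\sigma=-\operatorname{Im}(2\sigma_z\zeta)$ and $e^{\sigma}k_g=\kappa-\partial_N\sigma$, so the cross terms cancel as you claim, while $2\sigma_{z\bar z}=\tfrac12 e^{2\sigma}|K|$ follows from \eqref{eq:curvature} and $K\le 0$, and $k^2=k_g^2+|I\!I(V,V)|^2$ is the usual tangential--normal splitting transported by the isometry $\wf:(\D,e^{2\sigma}\mathbf{g}_0)\to\Sigma$. The paper states Lemma \ref{lemma:ahl-schwarzian} without proof, citing a variant of Lemma 2 in \cite{cdo:holomorphic}; your derivation from \eqref{eq:S1-and-S} is essentially the standard argument used in \cite{cdo:harmonic lift} and \cite{cdo:holomorphic}, so it matches the intended proof rather than offering a different route.
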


With this, we can now prove Theorem 4.1.

\begin{proof}[Proof of Theorem \ref{main result}] Let $z_1, z_2\in\D$ be two points. By assumption,
there is a geodesic $\gamma$ in the conformal metric $e^{2\rho}\mathbf{g}_0$ of length $<\delta$ joining
the two points. Let $\gamma=\gamma(t)$ be a Euclidean arc-length parametrization, and let $t=t(s)$
be a change of parameters so that $s\rightarrow \gamma(t(s))$ is a unit-length parametrization
relative of the background metric. This means that
$$e^{\rho(\gamma(t(s)))}|\gamma'(t(s))|\frac{dt}{ds}=1 \, ,$$
or
$$\frac{dt}{ds}=e^{-\rho(\gamma(t(s)))} \, .$$
We shall compute Ahlfors' Schwarzian of the parametrization $\psi(s)=\wf(\gamma(t(s)))=\varphi(t(s))$, using the notation of
Lemma 4.2. The new parametrization is defined for $s$ on an interval $I$ of length less than $\delta$. Using (\ref{reparam}), we
have that
$$ \S_1\psi(s)=\S_1\varphi(t(s))(t'(s))^2+\S t(s) \, , $$
where the first term on the right hand side comes from Lemma 4.2. We compute now the second term:
$$ \S t(s)=\left(\frac{t''}{t'}\right)'-\frac12\left(\frac{t''}{t'}\right)^2 \, . $$
We introduce the Euclidean unit vectors $\hat{t}, \hat{n}$ given by $\hat{t}=\gamma'(t)$ and $\gamma''(t)=\kappa\hat{n}$.
Since $t'=e^{-\rho(\gamma)}$, and since $(dt/ds)=e^{-\rho}$, we see that
$$\frac{t''}{t'}=-e^{-\rho}\nabla\rho\cdot\hat{t} \, ,$$
and therefore
$$\left(\frac{t''}{t'}\right)'=-e^{-2\rho}\Hess(\rho)(\hat{t},\hat{t})-e^{-2\rho}(\nabla\rho\cdot\hat{n})\kappa
+e^{-2\rho}(\nabla\rho\cdot\hat{t})^2\, .$$ Because $\gamma$ is a geodesic in the conformal metric, we have that
$\kappa=\nabla\rho\cdot\hat{n}$, which gives
\begin{equation}\label{S_1 repram} e^{2\rho}\S t(s)=-\Hess(\rho)(\hat{t},\hat{t})+\frac12(\nabla\rho\cdot\hat{t})^2-\kappa^2 \, .
\end{equation}
Therefore
$$e^{2\rho}\S_1\psi=\RR\{\S f(\gamma)(\gamma')^2\}+\frac12e^{2\sigma}\left(|K|+|I\!I(V,V)|^2\right)+A \, ,$$
where
$$A=-\Hess(\rho)(\hat{t},\hat{t})+\frac12(\nabla\rho\cdot\hat{t})^2-\frac12(\nabla\rho\cdot\hat{n})^2 \, .$$
Using that
$$B_{\mathbf{g_0}}(\rho)(\hat{t},\hat{t})=\Hess(\rho)(\hat{t},\hat{t})-(\nabla\rho\cdot\hat{t})^2-
\frac12\left(\Delta\rho-|\nabla\rho|^2\right) \, ,$$
some algebraic manipulations give that
$$A=-B_{\mathbf{g_0}}(\rho)(\hat{t},\hat{t})-2\rho_{z\bar{z}} =-\RR\{2(\rho_{zz}-\rho_z^2)(\gamma')^2\}-2\rho_{z\bar{z}}\, .$$
Therefore
$$e^{2\rho}\S_1\psi=\RR\left\{\left(\S f(\gamma)-2(\rho_{zz}-\rho_z^2)\right)(\gamma')^2\right\}+
\frac12e^{2\sigma}\left(|K|+|I\!I(V,V)|^2\right)-2\rho_{z\bar{z}} $$
$$\leq \left|\S f(\gamma)-2(\rho_{zz}-\rho_z^2)\right|+e^{2\sigma}|K|-2\rho_{z\bar{z}}\, .$$

\noindent Finally, the inequality (\ref{injectivity}) implies that
\begin{equation}\label{S1 estimate} \S_1\psi\leq\frac{2\pi^2}{\delta^2} \, . \end{equation}
We appeal now to Theorem A with $p(x)=\pi^2/\delta^2$ on an open interval $J$ of length less than $\delta$ containing
the closed interval $\overline{I}$, to conclude that $\psi$ is injective.  This proves the theorem.

\end{proof}

We will draw some corollaries as particular important cases of this theorem. The first instance corresponds
to the case when the diameter $\delta=\infty$, that is, when the metric is complete.

\begin{cor}
Let $f$ be a harmonic mapping with dilatation $\omega=q^2$
the square of a meromorphic function in $\D$. Let $\mathbf{g}=e^{2\rho}\mathbf{g}_0$ be
a complete metric in $\D$ conformal to the Euclidean metric. If
\begin{equation}\label{complete} \left|\S f -2\left(\rho_{zz}-\rho_z^2\right)\right|+e^{2\sigma}|K|
\leq -\frac12\rho_{z\bar{z}}  \end{equation}
then the lift $\wf$ is injective in $\D$.
\end{cor}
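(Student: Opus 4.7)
The plan is to derive Corollary 4.3 as the limiting case $\delta \to \infty$ of Theorem \ref{main result}. Completeness of $\mathbf{g}$ supplies, via Hopf--Rinow, a $\mathbf{g}$-geodesic of finite length joining any two prescribed points $z_1, z_2 \in \D$; what is lost in letting $\delta \to \infty$ is only the uniform upper bound on such lengths. This suggests running the proof of Theorem \ref{main result} essentially verbatim, with Theorem A invoked using $p(x) \equiv 0$ in place of $p(x) = \pi^2/\delta^2$.

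Concretely, I would pick a $\mathbf{g}$-geodesic $\gamma$ between $z_1$ and $z_2$, parametrize it by Euclidean arclength $t$, reparametrize by $\mathbf{g}$-arclength $s$ through $dt/ds = e^{-\rho(\gamma)}$, and set $\psi(s) = \wf(\gamma(t(s)))$. Following the derivation in the proof of Theorem \ref{main result} line by line---using Lemma \ref{lemma:ahl-schwarzian}, the reparametrization identity \eqref{reparam}, the computation of $\S t(s)$ via the geodesic equation $\kappa = \nabla\rho \cdot \hat{n}$, and the algebraic recasting in terms of the Schwarzian tensor $B_{\mathbf{g}_0}(\rho)$---one reaches the pointwise bound
$$e^{2\rho}\S_1\psi \leq \left|\S f - 2(\rho_{zz} - \rho_z^2)\right| + e^{2\sigma}|K| - 2\rho_{z\bar{z}}.$$
The hypothesis \eqref{complete} collapses the right-hand side to a non-positive quantity, yielding $\S_1\psi \leq 0$ on the entire $s$-interval.

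To close the argument, I apply Theorem A with $p(x) \equiv 0$: solutions of $u'' = 0$ are affine and vanish at most once, so the nondegeneracy hypothesis is trivially met on any interval whatsoever. After rescaling the $s$-interval to $(-1,1)$---under which the inequality $\S_1\psi \leq 0$ is preserved---Theorem A forces $\psi$ to be univalent, and in particular $\wf(z_1) \neq \wf(z_2)$. Since $z_1, z_2$ were arbitrary, $\wf$ is injective on $\D$.

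The only ingredient beyond the proof of Theorem \ref{main result} is the appeal to completeness, through Hopf--Rinow, to produce the connecting geodesic; the Ahlfors--Schwarzian computation itself is unchanged, and the benefit of completeness is precisely that, with the $\pi^2/\delta^2$ term absent, no upper bound on interval length is required when applying Theorem A. I do not foresee a genuine obstacle, as the argument is essentially a bookkeeping specialization of the machinery already assembled in Theorem \ref{main result} and Section 3.
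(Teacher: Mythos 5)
Your route is the same as the paper's: the paper gives no separate proof of this corollary, stating it simply as the case $\delta=\infty$ of Theorem \ref{main result}, and your re-run of that proof --- Hopf--Rinow supplying the connecting geodesic, the same Ahlfors--Schwarzian computation, and Theorem A with $p\equiv 0$ after an affine rescaling (which indeed leaves $\S_1\psi\le 0$ unchanged, since an affine change of parameter has vanishing Schwarzian) --- is a faithful unpacking of exactly that specialization.

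The one step that does not survive scrutiny against the statement as printed is the claim that \eqref{complete} ``collapses the right-hand side to a non-positive quantity.'' The proof of Theorem \ref{main result} gives $e^{2\rho}\S_1\psi\le \left|\S f-2(\rho_{zz}-\rho_z^2)\right|+e^{2\sigma}|K|-2\rho_{z\bar z}$, and inserting \eqref{complete} as printed yields only $e^{2\rho}\S_1\psi\le -\tfrac12\rho_{z\bar z}-2\rho_{z\bar z}=-\tfrac52\rho_{z\bar z}$. But the left-hand side of \eqref{complete} is non-negative, so \eqref{complete} itself forces $\rho_{z\bar z}\le 0$; hence $-\tfrac52\rho_{z\bar z}\ge 0$ and $\S_1\psi\le 0$ does not follow from the printed inequality. (In fact $\rho_{z\bar z}\le 0$ means $\mathbf{g}$ has non-negative curvature, which no complete conformal metric on the conformally hyperbolic disk can have, so the printed statement is vacuous --- a sign that the constant is a misprint.) What makes your argument, and the paper's, work is reading the right-hand side of \eqref{complete} as $2\rho_{z\bar z}=\tfrac12\Delta\rho$, which is what setting $\delta=\infty$ in \eqref{injectivity} actually produces and what Corollary 4.4 uses (there the right-hand side $2t/(1-|z|^2)^2$ equals $2\rho_{z\bar z}$ for $e^{\rho}=(1-|z|^2)^{-t}$); with that reading the right-hand side is $\le 2\rho_{z\bar z}-2\rho_{z\bar z}=0$ and the rest of your argument closes. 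So your proof is the intended one, but you should state and use the corrected bound explicitly rather than asserting the collapse from \eqref{complete} as written.
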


A second set of cases arise when considering
$$e^{\sigma}=\frac{1}{(1-|z|^2)^t} \quad ,\; t\geq 0 \, .$$
The resulting conformal metrics have negative curvature and are complete for $t\geq 1$. For $0\leq t<1$ the disk
is still geodesically convex but has finite diameter given by
$$\delta=2\int_0^1\frac{dx}{(1-x^2)^t} \, ,$$
which can be expressed in terms of the Gamma function as
$$\delta=\sqrt{\pi}\frac{\Gamma(1-t)}{\Gamma(\frac32-t)} \, .$$

\begin{cor}
Let $f$ be a harmonic mapping with dilatation $\omega=q^2$
the square of a meromorphic function in $\D$. If either
\begin{equation}
\left|\S f-\frac{2t(1-t)\bar{z}^2}{(1-|z|^2)^2}\right|+e^{2\sigma}|K|
\leq \frac{2t}{(1-|z|^2)^2}\; \; ,\; t\geq 1
\end{equation}
or
\begin{equation}
\left|\S f-\frac{2t(1-t)\bar{z}^2}{(1-|z|^2)^2}\right|+e^{2\sigma}|K|
\leq \frac{2t}{(1-|z|^2)^2}+\frac{2\pi}{(1-|z|^2)^{2t}}
     \left(\frac{\Gamma (\frac{3}{2}-t)}{\Gamma (1-t)}\right)^2
 \; ,\;0\leq t<1
\end{equation}
then the lift $\wf$ is injective in $\D$.
\end{cor}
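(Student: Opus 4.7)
The plan is to specialize Theorem~\ref{main result} to the rotationally symmetric family of conformal factors $\rho(z)=-t\log(1-|z|^2)$, corresponding to the metric $\mathbf{g}=(1-|z|^2)^{-2t}|dz|^2$. A direct calculation in $z,\bar z$-coordinates yields
\[
\rho_z=\frac{t\bar z}{1-|z|^2},\qquad \rho_{zz}-\rho_z^{\,2}=\frac{t(1-t)\bar z^{\,2}}{(1-|z|^2)^2},\qquad \rho_{z\bar z}=\frac{t}{(1-|z|^2)^2},
\]
so the term $2(\rho_{zz}-\rho_z^{\,2})$ on the left and the term $2\rho_{z\bar z}$ on the right of (\ref{injectivity}) already reproduce the matching pieces of (4.5) and (4.6).

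The second step is to verify the geodesic-convexity hypothesis and to identify the diameter $\delta$. The Gauss curvature
\[
K_{\mathbf{g}}=-e^{-2\rho}\Delta\rho=-4t\,(1-|z|^2)^{2t-2}
\]
is non-positive for every $t\geq 0$. By rotational invariance each Euclidean diameter of $\mathbb{D}$ is a $\mathbf{g}$-geodesic, and non-positive curvature forces the $\mathbf{g}$-distance to the origin to be a strictly convex function of arc length along any geodesic, so the exponential map at $0$ is a diffeomorphism and $\mathbb{D}$ is geodesically convex in $\mathbf{g}$. The radial distance to $\partial\mathbb{D}$ is $\int_0^1(1-x^2)^{-t}dx$, which diverges precisely when $t\geq 1$; in that range $\mathbf{g}$ is complete and Theorem~\ref{main result} applied with $\delta=\infty$ produces (4.5) directly from the computations above.

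For $0\leq t<1$ the same integral is finite. The rotational symmetry together with convexity of the radial distance function forces the $\mathbf{g}$-diameter to be attained, in the limit, by endpoints tending to antipodal boundary points along a common Euclidean diameter; hence
\[
\delta=2\int_0^1(1-x^2)^{-t}dx=B\!\left(\tfrac12,\,1-t\right)=\sqrt{\pi}\,\frac{\Gamma(1-t)}{\Gamma(\tfrac32-t)}
\]
after the substitution $u=x^{2}$. Plugging this into $2\pi^2 e^{2\rho}/\delta^{2}$ gives $2\pi\bigl(\Gamma(\tfrac32-t)/\Gamma(1-t)\bigr)^{2}(1-|z|^2)^{-2t}$, which combined with $2\rho_{z\bar z}=2t/(1-|z|^{2})^{2}$ reconstructs the right-hand side of (4.6); Theorem~\ref{main result} then yields the injectivity of $\wf$.

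The step I expect to be the main obstacle is the geometric identification of $\delta$, namely ruling out that some non-radial geodesic realizes a larger distance. The decisive input is that in a rotationally symmetric, non-positively curved metric the distance-to-origin function is strictly convex along every geodesic, so geodesics leaving the origin move monotonically outward; together with rotational symmetry this forces the extremal pair to be radial and reduces $\delta$ to the explicit one-dimensional integral above. Everything else is bookkeeping with the Beta function.
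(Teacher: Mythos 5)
Your proposal is correct and follows essentially the same route as the paper: the paper obtains Corollary 4.4 by specializing Theorem 4.1 to the metric with conformal factor $e^{\rho}=(1-|z|^2)^{-t}$, noting completeness for $t\geq 1$ and geodesic convexity with diameter $\delta=2\int_0^1(1-x^2)^{-t}\,dx=\sqrt{\pi}\,\Gamma(1-t)/\Gamma(\tfrac32-t)$ for $0\leq t<1$, exactly as you do. Your computations of $\rho_{zz}-\rho_z^2$, $\rho_{z\bar z}$ and of the diameter term agree with the paper, and your discussion of geodesic convexity is if anything more detailed than the paper's, which simply asserts it.
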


Three important instances of this corollary are obtained when setting $t=0, t=1$ and $t=2$,
which yields, respectively,
\begin{equation}\label{pi2} \left|\S f\right|+e^{2\sigma}|K|\leq\frac{\pi^2}{2} \, ,\end{equation}
\begin{equation}\label{nehari} \left|\S f\right|+e^{2\sigma}|K|\leq\frac{2}{(1-|z|^2)^2} \, ,\end{equation}
and
$$\left|\S f+\frac{4\bar{z}^2}{(1-|z|^2)^2}\right|+e^{2\sigma}|K|\leq \frac{4}{(1-|z|^2)^2}$$
as sufficient conditions for injectivity.

The third condition implies that
\begin{equation}\label{porky}\left|\S f\right|+e^{2\sigma}|K|\leq \frac{4}{1-|z|^2}\end{equation}
is sufficient for the injectivity of the lift. Indeed, if (\ref{porky}) holds, then
$$\left|\S f+\frac{4\bar{z}^2}{(1-|z|^2)^2}\right|+e^{2\sigma}|K|\leq \left|\S f\right|+\frac{4|z|^2}{(1-|z|^2)^2}+e^{2\sigma}|K|$$
$$\hspace{2,6in}\leq\frac{4}{1-|z|^2}+\frac{4|z|^2)}{(1-|z|^2)^2}= \frac{4}{(1-|z|^2)^2}\, .$$
Conditions (\ref{pi2}), (\ref{nehari}), and (\ref{porky}) were obtained in \cite{cdo:harmonic lift}.

\sm

The criteria (4.10) for $1\leq t\leq2$ and (4.11) without the
diameter term are generalizations of Ahlfors' condition for holomorphic $f$
\begin{equation}
\left|\S f-\frac{2c(1-c)\bar{z}^2}{(1-|z|^2)^2}\right|
\leq \frac{2|c|}{(1-|z|^2)^2}
\end{equation}
when $c$ is real. In (4.14) $c$ may be any complex number with $|c-1|<1$ \cite{Ah2}.
\me

We draw here two additional corollaries from our main result.

\begin{cor}
Let $f$ be a harmonic mapping with dilatation $\omega=q^2$
the square of a meromorphic function in $\D$. Let $\tau=\tau(z)$ be a real-valued
function in $\D$ satisfying
\begin{equation}|\tau_z|\leq \frac{c}{1-|z|^2} \quad , \; z\in\D \, ,\end{equation}
for some constant $c<1$. If
\begin{equation}
\left|\S f-2(\tau_{zz}-\tau_z^2)+\frac{4\bar{z}\tau_z}{1-|z|^2}\right|+e^{2\sigma}|K| \leq
\frac{2(1+(1-|z|^2)^2\tau_{z\bar{z}})}{(1-|z|^2)^2} \, ,
\label{eq:epstein-criterion}
\end{equation}
then the lift $\wf$ is injective in $\D$.
\end{cor}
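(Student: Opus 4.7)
The plan is to apply Theorem~\ref{main result} to the conformal metric $\mathbf{g}=e^{2\rho}\mathbf{g}_0$ obtained by rescaling the hyperbolic metric by $e^\tau$, i.e.\ take
\[
\rho(z)=\tau(z)-\log(1-|z|^2).
\]
With this single choice the condition (\ref{eq:epstein-criterion}) is set up to match the condition (\ref{injectivity}) of Theorem~\ref{main result}, so the proof reduces to (i) a short algebraic verification of the left- and right-hand sides, and (ii) a check that $\mathbf{g}$ is geodesically convex with some $0<\delta\le\infty$.

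For (i), direct computation gives
\[
\rho_z=\tau_z+\frac{\bar z}{1-|z|^2},\qquad
\rho_{zz}=\tau_{zz}+\frac{\bar z^{\,2}}{(1-|z|^2)^2},\qquad
\rho_{z\bar z}=\tau_{z\bar z}+\frac{1}{(1-|z|^2)^2},
\]
and hence $\rho_{zz}-\rho_z^2=\tau_{zz}-\tau_z^2-2\bar z\tau_z/(1-|z|^2)$. Substituting, the expression $\S f-2(\rho_{zz}-\rho_z^2)$ becomes precisely $\S f-2(\tau_{zz}-\tau_z^2)+4\bar z\tau_z/(1-|z|^2)$, while $2\rho_{z\bar z}$ equals $2(1+(1-|z|^2)^2\tau_{z\bar z})/(1-|z|^2)^2$, which is the right-hand side of (\ref{eq:epstein-criterion}).

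For (ii), observe first that the hypothesis (\ref{eq:epstein-criterion}) forces its own right-hand side to be non-negative, so $\rho_{z\bar z}\ge 0$ and the Gaussian curvature $K_{\mathbf{g}}=-4e^{-2\rho}\rho_{z\bar z}$ of $\mathbf{g}$ is non-positive. Next, the bound $|\tau_z|\le c/(1-|z|^2)$ with $c<1$ says that $\tau$ is Lipschitz with constant less than $1$ in the hyperbolic metric, and so $e^\tau$ is comparable along hyperbolic geodesics to exponentials of their hyperbolic length. Combining non-positive curvature with this hyperbolic comparison supplies enough control on the exponential map of $\mathbf{g}$ to produce a geodesic join between any two points of $\D$ with uniform length bound $\delta$: when $\mathbf{g}$ happens to be complete (as is the case for $c\le 0$), this is Cartan--Hadamard; when $\mathbf{g}$ has finite diameter one exhausts $\D$ by smooth subdomains $\{|z|<r\}$, takes minimizing $\mathbf{g}$-geodesics on each, and uses the hyperbolic comparison to rule out escape to the boundary in the limit.

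Once (i) and (ii) are in place, (\ref{eq:epstein-criterion}) reads
\[
\left|\S f-2(\rho_{zz}-\rho_z^2)\right|+e^{2\sigma}|K|\le 2\rho_{z\bar z}\le \frac{2\pi^2 e^{2\rho}}{\delta^2}+2\rho_{z\bar z},
\]
which is exactly the hypothesis (\ref{injectivity}), and Theorem~\ref{main result} yields the injectivity of $\wf$. The main obstacle in this scheme is step (ii) in the range $0<c<1$, where $\mathbf{g}$ typically has finite diameter and Cartan--Hadamard is not immediately available; verifying geodesic convexity then requires a careful comparison of $\mathbf{g}$-geodesics with hyperbolic ones, making full use of both the non-positive curvature of $\mathbf{g}$ and the hyperbolic Lipschitz bound $c<1$ on $\tau$.
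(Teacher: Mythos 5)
Your reduction is the paper's: the paper also sets $e^{\rho}=e^{\tau}/(1-|z|^2)$, identifies \eqref{eq:epstein-criterion} with \eqref{injectivity} with the diameter term discarded (one may simply take $\delta=\infty$ once geodesic convexity is known, so no uniform length bound is needed), and your explicit computation of $\rho_z,\rho_{zz},\rho_{z\bar z}$ is correct and fills in what the paper leaves implicit. The genuine gap is exactly where you flag it: step (ii), geodesic convexity, is not proved, and the route you sketch does not close it. Cartan--Hadamard requires completeness, which fails in general for $0<c<1$; non-positive curvature of an incomplete metric on $\D$ does not by itself give geodesic convexity; and the hyperbolic Lipschitz bound $|\tau(z)-\tau(z_1)|\le c\,d_{\mathrm{hyp}}(z,z_1)$ only yields, for a curve of hyperbolic length $L$ issuing from $z_1$, a $\mathbf{g}$-length at least $\tfrac12 e^{\tau(z_1)}\int_0^L e^{-cs}\,ds$, which stays below the finite number $e^{\tau(z_1)}/(2c)$ as $L\to\infty$. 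Such bounds remain finite for curves running out to $\partial\D$, so in your exhaustion argument they cannot rule out minimizing sequences escaping to the boundary; nothing in the sketch exploits the specific interplay between $\tau$ and the factor $1/(1-|z|^2)$, which is where the hypothesis $c<1$ actually enters.

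The missing idea is radial monotonicity of the conformal factor near $\partial\D$, and it is what the paper's one-line proof invokes. Since $\tau$ is real, $|\tau_r|\le|\nabla\tau|=2|\tau_z|\le 2c/(1-|z|^2)$, hence
\[
\rho_r=\tau_r+\frac{2r}{1-r^2}\ \ge\ \frac{2(r-c)}{1-r^2}\ >\ 0 \qquad\text{for } |z|=r>c\,.
\]
Consequently, for any $r_0\in(c,1)$ the radial projection $z\mapsto \min(|z|,r_0)e^{i\arg z}$ does not increase $\mathbf{g}$-length (for $|z|\ge r_0$ the density is at least its value at radius $r_0$ and the angular part of the length element only shrinks), so curves joining two points of $\{|z|<r_0\}$ may be assumed to lie in the compact disk $\{|z|\le r_0\}$, where a minimizing curve exists and, being interior, is a $\mathbf{g}$-geodesic. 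This gives geodesic convexity of $(\D,\mathbf{g})$ with no curvature hypothesis at all (Theorem~\ref{main result} does not need $\rho_{z\bar z}\ge0$, so your preliminary curvature observation, while correct, is not the relevant input). With this replacement for your step (ii), the rest of your argument coincides with the paper's proof.
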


This corollary constitutes a generalization of one of the main results in \cite{Ep1}. See also
\cite{AH} for even more general sufficient criteria for holomorphic mappings to be injective.

\begin{proof} Let $e^{\rho}=e^{\tau}/(1-|z|^2)$. The inequality (4.15)
guarantees that the radial derivative $\rho_r$ is positive for all $r$ sufficiently close
to 1. This implies that $\D$ is geodesically convex in the metric $e^{2\rho}\mathbf{g}_0$, and thus Theorem 4.2
is applicable. In this corollary we have excluded the diameter term appearing in (\ref{injectivity}).

\end{proof}

\begin{cor}
Let $f$ be a harmonic mapping with dilatation $\omega=q^2$
the square of a meromorphic function in $\D$. Suppose that
$$|\sigma_z|\leq \frac{c}{1-|z|^2} \quad , \; z\in\D \, ,$$
for some constant $c<1$. If
$$|z\sigma_z|+\frac14(1-|z|^2)e^{2\sigma}|K|\leq \frac{1}{1-|z|^2} $$
then the lift $\wf$ is injective in $\D$.
\end{cor}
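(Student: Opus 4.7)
The plan is to deduce this Becker-type criterion from Corollary~4.5 by specializing to the auxiliary conformal factor $\tau = \sigma$, so that the background metric on $\D$ becomes $e^{2\rho}\mathbf{g}_0$ with $e^\rho = e^\sigma/(1-|z|^2)$. The first hypothesis $|\sigma_z|\leq c/(1-|z|^2)$ with $c<1$ is precisely the Becker-type bound (4.15) required by Corollary~4.5; as explained in the proof of that corollary, it forces the radial derivative $\rho_r$ to be positive near $\partial\D$, which is what guarantees that $\D$ is geodesically convex in this metric. Corollary~4.5 can therefore be invoked with this choice of $\tau$.

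Two structural simplifications drive the reduction. First, by the definition of the harmonic Schwarzian recalled in Section~2, $\S f = 2(\sigma_{zz}-\sigma_z^2)$, so the relative Schwarzian term $\S f - 2(\tau_{zz}-\tau_z^2)$ on the left of \eqref{eq:epstein-criterion} vanishes identically when $\tau=\sigma$, and that left-hand side collapses to $4|z\sigma_z|/(1-|z|^2) + e^{2\sigma}|K|$. Second, because the surface $\Sigma$ is minimal, $K\leq 0$, and the curvature formula $K = -e^{-2\sigma}\Delta\sigma = -4e^{-2\sigma}\sigma_{z\bar z}$ from \eqref{eq:curvature} gives $\sigma_{z\bar z} = \tfrac{1}{4}e^{2\sigma}|K|$. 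This turns the right-hand side of \eqref{eq:epstein-criterion} into $2(1-|z|^2)^{-2} + \tfrac{1}{2}e^{2\sigma}|K|$.

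Putting the two sides together, \eqref{eq:epstein-criterion} becomes a pointwise algebraic inequality involving only $|z\sigma_z|$, $(1-|z|^2)e^{2\sigma}|K|$, and $(1-|z|^2)^{-1}$, which rearranges (after multiplying through by $(1-|z|^2)/2$ and consolidating the $e^{2\sigma}|K|$ contributions) into the condition assumed in the statement of the corollary. The main obstacle is precisely this bookkeeping step: the $\tfrac{1}{2}e^{2\sigma}|K|$ produced on the right by the minimal-surface identity has to be transferred across and absorbed into the $e^{2\sigma}|K|$ term on the left, and one must keep track of the fact that the sign cooperation relies essentially on $K\leq 0$. Once the algebra is set down, Corollary~4.5 delivers the injectivity of $\wf$ in $\D$.
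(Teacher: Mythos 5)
Your route is the same as the paper's: the paper proves this corollary by applying the main theorem with $e^{\rho}=e^{\sigma}/(1-|z|^2)$, which is exactly your specialization $\tau=\sigma$ in Corollary 4.5, and your preliminary reductions are correct ($\S f-2(\sigma_{zz}-\sigma_z^2)=0$, the left side of \eqref{eq:epstein-criterion} becomes $\frac{4|z\sigma_z|}{1-|z|^2}+e^{2\sigma}|K|$, and by minimality $\sigma_{z\bar z}=\frac14 e^{2\sigma}|K|$, so the right side becomes $\frac{2}{(1-|z|^2)^2}+\frac12 e^{2\sigma}|K|$).

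The gap is in the final ``bookkeeping'' step, which does not close as you claim. After cancelling the curvature terms, the inequality you must verify in order to invoke Corollary 4.5 is
\[
\frac{4|z\sigma_z|}{1-|z|^2}+\frac12 e^{2\sigma}|K|\ \le\ \frac{2}{(1-|z|^2)^2},
\]
and multiplying by $(1-|z|^2)/2$ gives
\[
2|z\sigma_z|+\frac14(1-|z|^2)e^{2\sigma}|K|\ \le\ \frac{1}{1-|z|^2},
\]
with a coefficient $2$ on $|z\sigma_z|$, not $1$. The corollary's stated hypothesis is therefore \emph{weaker} than what your reduction requires, so the implication runs the wrong way and the deduction fails as written. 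The discrepancy is not removable by the remaining hypotheses: the bound $|\sigma_z|\le c/(1-|z|^2)$, $c<1$, is already spent on geodesic convexity and only gives $|z\sigma_z|<c/(1-|z|^2)$, which cannot absorb an extra full $|z\sigma_z|$; and the dropped diameter term contributes no computable constant. A useful sanity check: for $K\equiv 0$ (holomorphic case, $\sigma=\log|h'|$) the inequality your argument actually delivers is $(1-|z|^2)|zh''/h'|\le 1$, i.e.\ the classical Becker criterion with its sharp constant, whereas the corollary as stated would correspond to constant $2$. So what your argument (and, if fleshed out, the paper's one-line proof) establishes is the corollary with $2|z\sigma_z|$ in place of $|z\sigma_z|$ (equivalently $|z\sigma_z|+\frac18(1-|z|^2)e^{2\sigma}|K|\le \frac{1}{2(1-|z|^2)}$); to present a correct proof you should either prove that modified statement or supply an additional argument for the stated constants, rather than asserting that the algebra ``rearranges into the condition assumed.''
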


Corollary 4.6 can be considered a generalization of the well known criterion for
univalence of Becker \cite{Be}.

\begin{proof} The condition on $\sigma_z$ ensures as before that $\D$ is geodesically convex with the metric  $e^{2\sigma}\mathbf{g}_0$.
The corollary follows at once by applying Theorem 4.2 with  $e^{\rho}=e^{\sigma}/(1-|z|^2)$.
\end{proof}

\section{Convexity and Continuous Extensions}

The purpose of this section is to establish the continuous extension to $\overline{\D}$ of
lifts satisfying (\ref{injectivity}). We begin with the following crucial lemma.

\begin{lemma} Let $f$ be a harmonic mapping with dilatation $\omega=q^2$
the square of a meromorphic function in $\D$. Let $\mathbf{g}=e^{2\rho}\mathbf{g}_0$ be
a metric in $\D$ conformal to the Euclidean metric, and suppose that (\ref{injectivity}) holds.
Then
$$u_f(z)=\sqrt{e^{\rho-\sigma}}$$ satisfies
$$\frac{d^2}{ds^2}u_f(\gamma(s))+\frac{\pi^2}{\delta^2}u_f(\gamma(s))\geq 0 \, ,$$
for any arc-length parametrized geodesic $\gamma(s)$ in the metric $\mathbf{g}$. In
particular, when $\mathbf{g}$ is complete then $u_f$ is convex in this metric.
\end{lemma}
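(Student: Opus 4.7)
The plan is to recycle the intermediate estimate from the proof of Theorem~\ref{main result} and reinterpret it as a Sturm comparison inequality via the classical correspondence between the Schwarzian derivative and second-order linear ODEs. The crucial observation is that the argument already given in Section~4 shows that the reparametrized curve $\psi(s) = \wf(\gamma(t(s)))$, with $s$ denoting $\mathbf{g}$-arc-length, satisfies
$$\S_1\psi(s) \leq \frac{2\pi^2}{\delta^2};$$
this is precisely the content of (\ref{S1 estimate}), and nowhere in its derivation is it used that $\gamma$ connects two prescribed points of $\D$ — the bound is pointwise along any $\mathbf{g}$-geodesic.

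First I would split $\S_1\psi$ using the Frenet--Serret expression (\ref{eq:S1-and-S}) into the classical Schwarzian $\S(s)$ of the positive scalar $v(s)=|\psi'(s)|$ plus the non-negative curvature contribution $\tfrac{1}{2}v^{2}k^{2}$, and discard the latter to obtain the pointwise bound $\S(s)\leq 2\pi^{2}/\delta^{2}$. Next I would apply the standard substitution $w = v^{-1/2}$, which converts a classical Schwarzian inequality $\S(s) \leq 2p(s)$ into the linear differential inequality $w'' + p\,w \geq 0$; with $p = \pi^{2}/\delta^{2}$ this produces exactly the Sturm inequality claimed in the lemma, provided one identifies $w$ with $u_f\circ\gamma$. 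The identification follows directly from the chain rule: the first fundamental form on $\Sigma$ is $e^{2\sigma}|dz|^{2}$, and because $|\gamma'|=1$ and $t'(s)=e^{-\rho(\gamma)}$ (as recorded in the proof of Theorem~\ref{main result}), one obtains $v = e^{\sigma-\rho}$, so that $w = \sqrt{e^{\rho-\sigma}} = u_f(\gamma(s))$.

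The ``in particular'' clause for complete $\mathbf{g}$ is then immediate: the estimate is valid with $\delta = \infty$, reducing the inequality to $(u_f\circ\gamma)''\geq 0$ along every $\mathbf{g}$-geodesic, which by the formula $\Hess_\mathbf{g}(u_f)(\gamma',\gamma') = (d^{2}/ds^{2})(u_f\circ\gamma)$ for unit-speed geodesics is precisely $\mathbf{g}$-convexity of $u_f$. I do not anticipate any serious obstacle, since every ingredient is already in place: the Schwarzian bound is essentially a line of the previous proof, the substitution $w = v^{-1/2}$ is folklore, and the chain-rule computation of $v$ was executed verbatim in Section~4. The only point requiring care is verifying that discarding the curvature term carries the correct sign, which is clear since $k$ is the Euclidean curvature of a $C^{3}$ curve in $\mathbb{R}^{3}$ and hence $k^{2}\geq 0$.
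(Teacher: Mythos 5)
Your argument is correct and is essentially the paper's own proof: both reuse the estimate $\S_1\psi\leq 2\pi^2/\delta^2$ from the proof of Theorem~\ref{main result} along an arbitrary $\mathbf{g}$-geodesic, drop the non-negative term $\tfrac12 v^2k^2$ in \eqref{eq:S1-and-S}, and apply the classical substitution $w=v^{-1/2}=(\tau')^{-1/2}=u_f\circ\gamma$ (with $\tau$ the arc-length primitive, $\tau'=v=e^{\sigma-\rho}$), which satisfies $w''+\tfrac12(\S\tau)w=0$ and hence, since $w>0$, the stated inequality $w''+(\pi^2/\delta^2)w\geq 0$. The only slip is terminological: what you call ``the Schwarzian of $v$'' is really the Schwarzian of the arc-length function $\tau$ whose derivative is $v$, which is exactly how you then use it, so nothing in the argument is affected.
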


\begin{proof} Let $\gamma=\gamma(s)$ be a geodesic in $\mathbf{g}$ parametrized by arc-length,
as was considered in the proof of Theorem 4.2, where it was shwon that the curve $\psi(s)=\wf (\gamma(s))$
satisfies (\ref{S1 estimate}). Let $\tau=\tau(s)$ be such that $\tau'(s)=|\psi'(s)|=e^{\sigma-\rho}$.
A well known fact that is easy to verify, ensures that the positive function
$U=(\tau')^{-1/2}=u_f(\gamma)$ satisfies
$$U''+\frac12(S\tau)U=0 \, .$$
Because of the definition of the $\S_1$ operator, we have that $S\tau\leq \S_1\psi\leq2\pi^2/\delta^2$, and thus
$$U''+\frac{\pi^2}{\delta^2}U\geq 0 \, ,$$
as desired.
\end{proof}

By means of comparison we will obtain from this lemma lower bounds for
the canonical function $u_f$ along geodesics, which will ensure  an extension of the lift along geodesics. A bit more
will be required to turn this information into a continuous extension to the closed disk $\overline{\D}$.
To this end, we introduce three conditions on the metric $\mathbf{g}$ that control the geometry of
the geodesics near $\partial\D$. The conditions are mild and far from restrictive, and were considered in \cite{co:extremal}
for the same purpose in the context of general criteria for injectivity for holomorphic functions defined
in $\D$.

Unless noted otherwise, in the remainder of this paper  we will {\it always assume} that the metric $\mathbf{g}$
has non-positive curvature, that is, that $\rho_{z\bar{z}}\geq 0$, and so we will not state this as a separate
assumption in any of our results.
Geometrically, the  main consequence of this is that geodesics cannot cross more than once in $\D$. We let
$l_{\mathbf{g}}$ denote the length function (of a curve) and $d_{\mathbf{g}}$ the distance (between points).

The first property has to do with extending geodesics to the boundary, and with reaching every boundary point
in this way.  We state the property  first as it often appears in the literature, but we must then say more to
distinguish the complete and the non-complete cases.

\begin{defn} The metric $\mathbf{g}$ on $\D$ has the {\em Unique Limit Point} property (ULP) if:

\sm
\noi(a) Let $z_0\in\D$. If $\gamma(t)$, $0\le t<T\le \infty$
is a maximally extended geodesic starting at $z_0$ then $\lim_{t\ra T} \gamma(t)$ exists (in the
Euclidean sense). We denote it by
$\gamma(T) \in \partial\D$.

\sm
\noi(b) The limit point is a continuous function of the initial direction at $z_0$.

\sm
\noi(c) Let $\zeta\in\partial\D$. Then there is a geodesic starting at $z_0$ whose limit point on $\partial\D$ is $\zeta$.

\end{defn}

We say a little more about part (c) in this condition. The assumption of non-positive
curvature implies that the limit point is a monotonic function
of the initial direction at the base point. Part $(b)$ requires that it is
continuous. It is conceivable that,  for some metrics,  all geodesics from
a base point might tend to the same limit point on the boundary, so the
mapping from initial directions to points on $\partial\D$ would reduce to
a constant. We want to avoid this degenerate situation and be certain that
every boundary point is `visible', so we include that fact  in the statement of (ULP).

(ULP) is a natural condition on complete metrics and is frequently formulated this way, if not
with this appellation.  For our work on boundary behavior in the non-complete case we have to
strengthen it slightly. Again take any base point
$z_0\in\D$ and consider geodesics from $z_0$ extended maximally to their unique limit points on
the boundary. In general, the length of such a geodesic as a function of the initial direction at
$z_0$ is lower semicontinuous, and for our arguments we need to know that it is continuous.   We let
(ULP*) mean (ULP)  {\it plus} the continuity of the length function. This is the assumption we
will often adopt in the non-complete case. In the complete  case the length function is the constant
function
$+\infty$  and the particular problems we encounter in the non-complete case do not come up; (ULP)
will suffice as is.

The conditions above must be hypotheses in many of our results, but none of them, alone or together,
is asking too much of a metric (see, for example, Theorems 7, 8 in \cite{co:extremal}.)

\begin{thm} Let $f$ be a harmonic mapping satisfying the hypotheses
in Theorem 4.1, and suppose that the metric $\mathbf{g}$ satisfies (ULP) if it is complete and (ULP*) if it is not
complete. Then $\wf$ admits a (spherically) continuous extension to
$\overline{\D}$.
\end{thm}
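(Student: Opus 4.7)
The plan is to combine the pointwise Schwarzian bound $\S_1\psi \le 2\pi^2/\delta^2$ derived along every geodesic in the proof of Theorem 4.1 with Theorem B to obtain a spherically continuous extension of $\wf$ along each maximal geodesic ray from a fixed base point; (ULP), respectively (ULP*), then lets me assemble these pointwise boundary values into a continuous extension of $\wf$ to $\overline{\D}$.

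Fix $z_0 \in \D$. By (ULP)(c) together with (ULP)(a), for each $\zeta \in \partial\D$ I may select a maximally extended unit-speed geodesic $\gamma_\zeta : [0, T_\zeta) \to \D$ starting at $z_0$ whose Euclidean limit at $s = T_\zeta$ equals $\zeta$; the non-positive curvature standing hypothesis together with the monotonicity and continuity encoded in (ULP)(b) makes $\gamma_\zeta$ unique. The proof of Theorem 4.1 already gives $\S_1 \psi_\zeta \le 2\pi^2/\delta^2$ for $\psi_\zeta = \wf \circ \gamma_\zeta$. Since $T_\zeta \le \delta$, I would embed $[0, T_\zeta)$ in a symmetric open interval of length at most $\delta$ and apply Theorem B with $p \equiv \pi^2/\delta^2$ (as in the Remark at the end of Section 3) to conclude that $\psi_\zeta$ has a spherically continuous extension to the right endpoint. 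Define $\wf(\zeta) := \lim_{s \nearrow T_\zeta} \psi_\zeta(s)$, a well-defined point of $\R^3 \cup \{\infty\}$.

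For continuity on $\overline{\D}$, consider a sequence $w_n \to \zeta \in \overline{\D}$. Write $w_n = \gamma_n(s_n)$, where $\gamma_n$ is the unit-speed geodesic from $z_0$ through $w_n$ if $w_n \in \D$, and $\gamma_n = \gamma_{w_n}$ if $w_n \in \partial\D$. Part (b) of (ULP) forces the initial directions of $\gamma_n$ to converge to the initial direction of $\gamma_\zeta$, and standard ODE dependence gives $\gamma_n \to \gamma_\zeta$ locally uniformly on $[0, T_\zeta)$. In the incomplete case (ULP*) is exactly what forces $T_n \to T_\zeta$, and hence $s_n \to T_\zeta$; in the complete case $T_\zeta = \infty$ and $s_n$ is bounded, which is easier. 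Since $\S_1 \psi_n \le 2\pi^2/\delta^2$ uniformly in $n$, the quantitative velocity bound $|\varphi'(x)| \le \Phi'(|x|)$ from Theorem B, applied after rescaling each interval $[0, T_n]$ to $[-1, 1]$, supplies a spherical modulus of continuity for the family $\{\psi_n\}$ near the right endpoint that is independent of $n$. Comparing $\psi_n(s_n)$ with $\psi_\zeta(T_\zeta)$ through this uniform modulus then yields $\wf(w_n) \to \wf(\zeta)$ spherically.

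The main obstacle is precisely this last step: upgrading the pointwise spherical extension along a single geodesic into a continuous assembly over a varying family of geodesics. The pointwise existence is immediate from Theorem B, but the uniformity requires its quantitative content together with careful handling of the reparametrization $[0, T_n] \to [-1, 1]$ and of the continuous variation of the length $T_n$. In the incomplete case this is exactly what (ULP*) is designed for, while in the complete case the difficulty is absent because the parameter interval is fixed at $[0, \infty)$. The convexity-type inequality in Lemma 5.1 gives an alternative route to a lower bound for $u_f$ along geodesics, and thus an upper bound for $|\psi'|$, which could also be leveraged at this step.
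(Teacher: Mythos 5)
Your pointwise step is fine in spirit (along a single geodesic, the bound $\S_1\psi\le 2\pi^2/\delta^2$ plus Theorem B does give a spherically continuous limit at the endpoint), but the step you yourself flag as the main obstacle is a genuine gap, and the route you propose for it does not close. Theorem B's quantitative estimate $|\varphi'(x)|\le\Phi'(|x|)$ is only available after normalizing the curve by $\varphi(0)=0$, $|\varphi'(0)|=1$, $\varphi''(0)=0$; killing $\varphi''(0)$ in general requires postcomposing with an inversion, and this M\"obius normalization changes from geodesic to geodesic. What you obtain is therefore a modulus of continuity for $M_n\circ\psi_n$ with $M_n$ depending on $n$, and without uniform control of the $M_n$ (their centers relative to the curves) this does not transfer to a spherical modulus for $\{\psi_n\}$ itself. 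There is a second quantitative defect: after rescaling an interval of length $l$ to $(-1,1)$ the comparison function is $\Phi(x)=(1/c)\tan(cx)$ with $c=\pi l/(2\delta)$, and $\Phi'(1)=\sec^2(c)\to\infty$ as $l\to\delta$, so the bound degenerates exactly for the geodesics whose lengths approach $\delta$; and in the complete case the parameter interval is infinite, so the affine rescaling you invoke is not available at all (one needs a non-affine reparametrization, whose Schwarzian must be accounted for). Finally, your definition of the boundary value presupposes that the geodesic from $z_0$ ending at $\zeta$ is unique, which (ULP) does not guarantee, so well-definedness of $\wf(\zeta)$ is also left open.

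The paper's proof resolves precisely this uniformity issue by a different mechanism, the one you only mention in passing: Lemma 5.1. Given a boundary point, one fixes a base point $z_0$ at $\mathbf{g}$-distance $<\delta/8$ from it and a \emph{single} M\"obius shift $M$ chosen so that $u_{M\circ\wf}(z_0)=1$ and $\grad u_{M\circ\wf}(z_0)=0$. Lemma 5.1 then gives $U''+(\pi^2/\delta^2)U\ge 0$ along \emph{every} geodesic from $z_0$ in a small sector of directions whose limit points lie within $\mathbf{g}$-distance $\delta/4$, whence $U\ge\cos(\pi/4)=1/\sqrt2$ uniformly, i.e. $|d(M\circ\wf)|=e^{\rho}/u^2\le 2e^{\rho}$ on the whole sector. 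This yields $\int_\gamma|d(M\circ\wf)|\le 2\,l_{\mathbf{g}}(\gamma)$ uniformly, so limits exist along each geodesic, and the tails of these length integrals are controlled uniformly by the $\mathbf{g}$-length of the remaining piece of geodesic — which is where (ULP*) (continuity of the length function) enters to make the tails small simultaneously for nearby directions; continuity on compacta handles the middle term. In the complete case the same single shift is chosen so that $U'(0)\ge c>0$ on a sector, giving $U\ge b+ct$ and convergent length integrals, with only (ULP) needed. So the fix for your argument is not more careful bookkeeping of per-geodesic normalizations, but replacing them by the canonical function $u_{M\circ\wf}$ with one shift per base point, exactly as in Lemma 5.1 and the argument of \cite{co:extremal}.
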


\begin{proof} Let $\Sigma=\wf(\D)$.  The proof is based entirely on the one given
for the corresponding theorem for holomorphic mappings in [7, Thm. 3]. We
include the proof for the convenience of the reader. We will show that small arcs on $S^1$, corresponding to
intervals of initial directions of geodesics from a base point, which parametrize small arcs on
$\partial\Sigma$. To obtain the requisite estimates we have to modify
$\wf$ by  M\"obius transformations of the range, and this is why the theorem is stated in terms of spherical continuity.
Composing $\wf$ with a M\"obius transformation will generally not preserve minimality, but as we have seen,
all proof are based on Ahfors' operator, which is preserved under such
compositions.

The proof is slightly different in the two cases
$\dl <\infty$ and
$\dl=\infty$. We consider first
$\dl<\infty$; thus (ULP*) is in force.  Let $\zeta_0\in\partial\D$ and let $\gamma_0$ be a geodesic
in
$\D$ ending at $\zeta_0$. Let
$z_0\in\gamma_0$ be a point of distance $<\dl/8$ from $\zeta_0$, and let $\theta_0$ be the direction of
$\gamma_0$
at $z_0$. Choose a small enough  neighborhood $V$ of initial directions about $\theta_0$ with
corresponding geodesics covering an  arc $I\subset\partial\D$ of limit points so that  the
distances between
$z_0$ and all such limit points is $\le \dl/4$.

Let $\theta\in V$ and let $\gamma(t)$, $0\le
t\le T_\theta$ be the corresponding geodesic starting at $z_0$ and ending at a point on $I\subset
\partial\D$.
Replace $\wf$ by $M\circ \wf$, where the M\"obius transformation $M$ is chosen so that the associatied function
$u_{M\circ \wf}$ satisfies
$$
\grad u_{M\circ \wf}(z_0)=0 \quad \mbox{and}\quad u_{M\circ f}(z_0)=1.
$$
We want to apply Lemma 5.1 to $u_{M \circ \wf}$ along the geodesics $\gamma$. Since $\S_1(M\circ \wf)=\S_1 \wf$,
we continue to write $\wf$ for $M\circ \wf$ and $u_{\wf}$ for $u_{M\circ \wf}$. The function $U(t)=u_{\wf}(\gamma(t))$ satisfies
$$
U'' \ge -\frac{\pi^2}{\dl^2}U, \quad U(0)=0,\quad U'(0)=1.
$$
From this,
$$
U(t)\ge \cos(\frac{\pi}{\dl}t),
$$
and so
$$
U(t)\ge \cos(\frac{\pi}{\dl}\frac{\dl}{4})=\frac{1}{\sqrt 2}.
$$
Note that since $u_{\wf}$ is  non-zero in the sector swept out by the geodesics $\gamma$, the mapping $\wf$ remains
away from infinity there.  Thus
$$
{|d\wf|}=e^{\sigma} \le 2e^\sg,
$$
 along $\gamma$, and
\begin{equation}
\int_\gamma {|d\wf|}\,|dz| \le 2l_g(\gamma)\le \frac{\dl}{2}. \label{eq:bounded-integral}
\end{equation}
This implies that
$$
\lim_{t\ra T_\theta} \wf(\gamma(t))
$$
exists. We denote the limit by $\wf(\gamma(T_\theta))$; it lies on $\partial\Sigma$.

We prove next that $\wf(\gamma(T_\theta))\in\partial\Sigma$ depends continuously on the initial
direction $\theta$ of the geodesic. Let $\gamma_1$, $0\le t\le T_{\theta_1}$ and $\gamma_2$, $0\le t\le
T_{\theta_2}$, be two geodesic rays starting at $z_0$ with $\theta_1,\theta_2\in V$. We need to
estimate the  distance between
$\wf(\gamma_1(T_{\theta_1}))$ and
$\wf(\gamma_2(T_{\theta_2}))$. Let
$0<\tau <\min\{T_{\theta_1},T_{\theta_2}\}$. Then
\begin{equation*}
\begin{split}
\label{eq:continuity} |\wf(\gamma_1(T_{\theta_1}))-\wf(\gamma_2(T_{\theta_2}))|&\le
 |\wf(\gamma_1(T_{\theta_1}))-\wf(\gamma_1(\tau))|+
|\wf(\gamma_1(\tau))-\wf(\gamma_2(\tau))|\\
&\qquad+|\wf(\gamma_2(T_{\theta_2}))-\wf(\gamma_2(\tau))|.
\end{split}
\end{equation*}
The terms $|\wf(\gamma_i(T_{\theta_i}))-\wf(\gamma_i(\tau))|$ are dominated by the tails of the
integrals in \eqref{eq:bounded-integral}
which are uniformly bounded by $\dl/2$. Now using the continuity of the length function in the
hypothesis (ULP*), there is a
$\tau_0$ so that both these terms are small for
$\tau_0\le \tau <\min\{T_{\theta_1},T_{\theta_2}\}$ if $|\theta_1-\theta_2|$ is small. The
remaining term can be controlled using the continuity of
$\wf$ and the fact that
$|\gamma_1(\tau)-\gamma_2(\tau)|$ is small if $|\theta_1-\theta_2|$ is small. These estimates prove that
the
 endpoints $f(\gamma(T_\theta))\in\partial\Sigma$,
$\gamma$ varying, depend continuously on the initial directions $\theta=\gamma'(0)$.

It remains to show that any point in $\partial\Sigma$ is the image $\wf(\gamma(T_\theta))$ as in the
construction above.  Let
$\omega\in\partial\Sigma$ and let
$\{w_n\}$ be a sequence of points in $\Sigma$ which converges to $\omega$. Choose a subsequence,
labeled the same way, of $z_n=\wf^{-1}(w_n)$ converging to a point $\zeta\in\partial\D$. Let
$z_0\in\D$ be a point of distance $<\dl/8$ from $\zeta$.

Let $g_1$ be the metric on $\Sigma$ obtained by pulling back the metric $g$ on $\D$ by $\wf^{-1}$. Thus
$\wf\colon(\D,g) \rightarrow (\Sigma,g_1)$ is an isometry. Let
let $\Gamma_n(t)$ be the $g_1$-geodesic
joining
$\wf(z_0)=w_0$ to $w_n$ with $\Gamma_n(0)=w_0$. Another subsequence, again labeled in the same way, of
the initial directions
$\Gamma_n'(0)$ converges to a direction which determines a geodesic $\Gamma$. Let
$\gamma=\wf^{-1}(\Gamma)$, $\gamma=\gamma(t)$, $\theta=\gamma'(0)$, $0\le t \le T_\theta$. Let
$\gamma_n=\wf^{-1}(\Gamma_n)$ and let
$t_n=l_g(\gamma_n)=l_{g_1}(\Gamma_n)$.
Write
\begin{equation*}
\begin{split}
|\wf(\gamma(T_\theta))-w_n|&=
|\wf(\gamma(T_\theta))-\wf(\gamma_n(t_n))|\\
&\le |\wf(\gamma(T_\theta))-\wf(\gamma(\tau))|+|\wf(\gamma(\tau))-\wf(\gamma_n(\tau))|\\
&\qquad +|\wf(\gamma_n(\tau))-\wf(\gamma_n(t_n))|.
\end{split}
\end{equation*}
As $\gamma_n'(0)\ra \gamma'(0)=\theta$, we conclude   for $n$
sufficiently large that $|\wf(\gamma(T_\theta))-w_n|$ can be made arbitrarily small by choosing $\tau$
close enough to $T_\theta$. Hence $\omega=\wf(\gamma(T_\theta))$. This completes the proof in the case
$\dl<\infty$.

\bigskip

We indicate now how the argument should be modified in the complete case
$\dl=\infty$. Choose a base point $z_0$, which is fixed for the entire argument. Let
$w_0=\wf(z_0)$.
The $g_1$-geodesic rays from $w_0$ can
be extended indefinitely, and we need to know that  they have a   limit.
Any such ray is the image under $\wf$ of a geodesic  $\gamma=\gamma(t)$,
$\gamma(0)=z_0$. Changing $\wf$ by an appropriate M\"obius transformation of the range, and maintaining the same
notation convention as above, we may
assume that
$U'(0)\ge c>0$. Then, as before
we have $U(t)\ge b+ct$, $t\ge 0$, and
\begin{equation}
\int_\gamma{|d\wf|}\,|dz| <\infty. \label{eq:bounded-integral-2}
\end{equation}
Thus $\lim_{t\ra \infty}\wf(\gamma(t))$ exists, and we denote
if by $\wf(\gamma(\infty))\in\partial\Sigma$.

For the continuity of   $\wf(\gamma(\infty))$ depending on
the initial directions at
$z_0$ we argue as follows. Take a geodesic $\gamma_1(t)$ from $z_0$. This time we modify $\wf$ by a
M\"obius transformation to change the gradient of $u_{\wf}$ at $z_0$ so that
$U'(0)\ge c>0$ for all rays from $z_0$ that form an angle of less than $\pi/4$ with $\gamma_1'(0)$.
This makes the
integrals in (\ref{eq:bounded-integral-2}) uniformly bounded over all such
rays, and $\wf$ uniformly bounded in the sector covered by the rays. From here the proof of
continuity, and that all of $\partial\Sigma$ is hit by the $\wf(\gamma(\infty))$, is almost identical to the
above. Only (ULP) is necessary.

\end{proof}

\section{Extremal Lifts}

\begin{defn} Let $f$ be a harmonic mapping satisfying the hypotheses of Theorem 4.1. We say that $\wf$ is an
{\em extremal lift} for (\ref{injectivity})
if the extension of $\wf$ to $\overline\D$ is not injective on $\partial\D$.  A geodesic $\gamma$ in $\D$ is called
an {\it extremal geodesic} if it joins two points on
$\partial\D$ where an extremal lift $\wf$ fails to be injective. The lift $\wf$ is called non-extremal if it remains injective
on $\overline{\D}$.

\end{defn}

\begin{defn} The metric $g$ on $\D$ has the {\em Boundary Points Joined} property (BPJ) if any two
points on $\partial\D$  can be joined by a  geodesic which
lies in $\D$  except for its endpoints.

\end{defn}

We now state

\begin{thm}
Let $\mathbf{g}$ have the properties (ULP) (or (ULP*)) and (BPJ). Then the following hold.

\sm
\noi (i) Equality holds in (\ref{injectivity}) for an extremal lift along an extremal geodesic.

\sm
\noi (ii) The image $\wf(\gamma)$ of an extremal geodesic under the extremal lift $\wf$ is a Euclidean circle
that is also a line curvature.

\sm
\noi (iii) The minimal surface $\Sigma=\wf(\D)$ is part of a catenoid.

\end{thm}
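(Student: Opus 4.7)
The plan is to run all three parts off a single extremal geodesic, extracting rigid consequences from tight inequalities.

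For part (i), I first pick an extremal lift $\wf$; by Theorem 5.3 its continuous extension identifies two boundary points $\zeta_1 \ne \zeta_2 \in \partial\D$. Using (BPJ), let $\gamma$ be a geodesic in $\mathbf{g}$ joining $\zeta_1$ to $\zeta_2$, parametrized by arc-length $s$ in $\mathbf{g}$ on an interval $I$ of length at most $\delta$. Set $\psi(s) = \wf(\gamma(s))$. The proof of Theorem 4.1 produces
\begin{equation*}
  e^{2\rho}\S_1\psi = \RR\{(\S f - 2(\rho_{zz}-\rho_z^2))(\gamma')^2\} + \tfrac{1}{2}e^{2\sigma}(|K|+|I\!I(V,V)|^2) - 2\rho_{z\bar{z}},
\end{equation*}
bounded by $2\pi^2 e^{2\rho}/\delta^2$ via two successive inequalities together with the hypothesis (\ref{injectivity}). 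If the strict inequality $\S_1\psi < 2\pi^2/\delta^2$ held on all of $I$, Remark 3.1 would make $\psi$ injective on $\overline I$, contradicting $\wf(\zeta_1)=\wf(\zeta_2)$. Hence $\S_1\psi \equiv 2\pi^2/\delta^2$ along $\gamma$; running the bound in reverse forces equality at every stage. In particular, (\ref{injectivity}) holds with equality pointwise along $\gamma$, which is the content of (i); additionally, the phase-alignment $\RR\{(\S f-2(\rho_{zz}-\rho_z^2))(\gamma')^2\} = |\S f - 2(\rho_{zz}-\rho_z^2)|$ and the geometric identity $|I\!I(V,V)|^2 = |K|$ must hold along $\wf\circ\gamma$.

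For the line-of-curvature assertion in (ii), I use that on the minimal surface $\Sigma$ one has $k_1 = -k_2$, so $|K| = k_1^2$, and in a principal orthonormal frame $I\!I(V,V) = k_1 \cos 2\theta$, where $\theta$ is the angle between $V$ and a principal direction. The equality $|I\!I(V,V)|^2 = |K|$ then collapses to $\cos 2\theta = \pm 1$, which means $V$ is a principal direction along $\wf\circ\gamma$; that is, $\wf(\gamma)$ is a line of curvature. For the Euclidean-circle half of (ii), post-compose $\wf$ with a M\"obius transformation $M$ of $\R^3 \cup \{\infty\}$ sending $\wf(\zeta_1)=\wf(\zeta_2)$ to infinity. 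Since $\S_1$ is M\"obius invariant, $\S_1(M\circ\psi) \equiv 2\pi^2/\delta^2$ on $I$ and $M\circ\psi$ is still non-injective on $\overline I$, with both endpoints now at infinity. Rescaling $I$ to $(-1,1)$ and applying Theorem B case (ii) with extremal $\Phi(x) = (\delta/\pi)\tan(\pi x/\delta)$ yields $M\circ\psi = R\circ\Phi$ for some rotation $R$, whose image is a straight line in $\R^3$. Transporting back by $M^{-1}$, which sends lines through infinity to Euclidean circles, one concludes that $\wf(\gamma)$ is a Euclidean circle.

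Part (iii) will follow from classical rigidity for minimal surfaces containing a circular line of curvature. Any planar line of curvature forces $\Sigma$ to be symmetric under reflection across the plane $P$ of the curve by Joachimsthal's theorem; when the curve is a round circle, the extra rotational symmetry propagates, via the analyticity of minimal surfaces and the Weierstrass--Enneper data, into rotational symmetry of $\Sigma$ about the axis through the center of the circle perpendicular to $P$. Since the only non-planar minimal surface of revolution is the catenoid, $\Sigma$ is a piece of a catenoid.

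The main obstacle will be justifying the rotational propagation in part (iii) from a single circular line of curvature. If a direct appeal to this classical fact is not considered strong enough, the (BPJ) hypothesis together with the continuity of $\wf$ on $\overline{\D}$ provided by Theorem 5.3 can be exploited to produce a one-parameter family of extremal geodesics whose images sweep out a band of circular lines of curvature. From such a band one reads off the rotational symmetry of the Weierstrass data uniformly, and hence the catenoid identification.
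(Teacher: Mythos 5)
Your outline for parts (i)--(ii) follows the paper's strategy, but the key rigidity step in (i) is not actually proved. From the failure of the strict inequality $\S_1\psi<2\pi^2/\delta^2$ on all of $I$ you may only conclude that equality $\S_1\psi=2\pi^2/\delta^2$ holds at \emph{some} point; the jump to ``hence $\S_1\psi\equiv 2\pi^2/\delta^2$ along $\gamma$'' is a non sequitur, and it is precisely this identity that drives everything else (equality in (\ref{injectivity}), $|I\!I(V,V)|^2\equiv|K|$, and the circle conclusion). The paper closes this gap in two steps you are missing: first it shows $l_{\mathbf g}(\gamma)=\delta$ (if $l<\delta$ then $\S_1\psi\le 2\pi^2/\delta^2<2\pi^2/l^2$ and Remark 3.1 would force injectivity on $\overline I$), and only then invokes the rigidity corollary of Theorem B --- a curve with $\S_1\varphi\le 2p$ and $\varphi(-1)=\varphi(1)$ must satisfy $\S_1\varphi\equiv 2p$ and map $[-1,1]$ onto a circle or a line union $\{\infty\}$. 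The same omission infects your part (ii): your appeal to case (ii) of Theorem B needs $\Phi(1)=\infty$, which after rescaling requires exactly $l=\delta$; without that you could land in case (i). With $l=\delta$ established, the circle statement comes directly from the corollary of Theorem B (as in the paper), and your principal-direction computation from $|I\!I(V,V)|^2=|K|$ is fine.

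Part (iii) takes a genuinely different route from the paper, and as written it fails. The claim that a \emph{planar line of curvature} forces reflection symmetry of $\Sigma$ across its plane is false: that symmetry principle is for planar \emph{geodesics} of the surface, and $\wf(\gamma)$ is the image of a geodesic of the auxiliary metric $\mathbf g$, not of the induced metric on $\Sigma$. A catenoid itself is the counterexample --- any circle of revolution other than the neck is a planar line of curvature, yet the catenoid is not symmetric across its plane. The subsequent ``rotational propagation'' and the fallback of a one-parameter family of extremal geodesics sweeping out a band of circles are also unsubstantiated: extremality is rigid and nothing guarantees more than a single extremal geodesic. The paper's argument avoids all of this: along the circular line of curvature the surface normal makes a constant angle with the plane of the circle, one chooses a catenoid possessing a circle of revolution of the same radius and angle, and the Bj\"orling uniqueness theorem (uniqueness of the minimal surface through a real-analytic curve with prescribed real-analytic normal data) identifies $\Sigma$ with that catenoid. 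You need an argument of this type, or some other genuine rigidity input, to conclude (iii).
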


\begin{proof} Let $\wf$ be an extremal lift along an extremal geodesic $\gamma$ in $\D$.

\sm
\noi (i) Let $\psi(s)=\wf(\gamma(s))$ be the restriction of $\wf$ to $\gamma$, as considered in
the proof of Theorem 4.1. The parameter $s$ ranges over an interval $I$
of length $l=l_{\mathbf{g}}(\gamma)\leq\delta$. It was shown in the proof of the theorem that
$$ \S_1\psi\leq\frac{2\pi^2}{\delta^2} \, .$$
We claim that $l_{\mathbf{g}}(\gamma)=\delta$ and that
$$ \S_1\psi\equiv\frac{2\pi^2}{\delta^2} \, ,$$
$$|I\!I(V,V)|^2\equiv |K| \ .$$
To prove the claim, suppose that $l<\delta$.
Then
$$ \S_1\psi\leq\frac{2\pi^2}{\delta^2} < \frac{2\pi^2}{l^2}\, ,$$
which would imply by Remark 3.1 that $\psi$ cannot fail to be injective on $\overline{I}$, a contradiction.
Hence $l=\delta$. Theorem B now shows that $\S_1\psi\equiv\frac{2\pi^2}{\delta^2} \, .$ The proof
of Theorem 4.1 shows now that this is only possible if $|I\!I(V,V)|^2\equiv |K|$, and hence
equality must hold in (\ref{injectivity}) along $\gamma$.

\sm
\noi (ii) The equation $|I\!I(V,V)|^2\equiv |K|$, implies that $\wf(\gamma)$ must be a line of curvature, which
is also a circle by part (i).

\sm
\noi (iii) According to the Bj\"{o}rling problem (see, {\it e.g.}, \cite{minimal surfaces}, p. 121), there exists
a unique minimal surface
with a given real-analytic tangent plane along a given real-analytic arc. Because the normal vector to any planar line
of curvature
of a surface forms a constant angle with the normal to the surface (see, {\it e.g.}, \cite{docarmo}, p. 152),
there exists an appropriate circle
of revolution of a catenoid that is also a line of curvature, and which forms this same angle. By the above uniqueness result,
we conclude that $\wf(\gamma)$ must lie on a catenoid.

\end{proof}

We finish the paper with the following criterion.

\begin{thm} Let $\Sigma\subset\mathbb{R}^3$ be a
geodesically convex immersed minimal disk. Suppose that
\begin{equation}\label{g-convex} |K|\, \leq \, \frac{4\pi^2}{\delta^2} \, ,\end{equation}
where $K$ stands for the Gaussian curvature and $\delta$ for the diameter of $\Sigma$.
Then the following hold.

\sm \noi (i) The minimal disk $\Sigma$ is embedded.

\sm \noi (ii) The boundary $\partial\Sigma$ admits a continuous parametrization by the circle $\mathbb{S}^1$.

\sm \noi (iii) Suppose that any two points on $\partial\Sigma$ can be joined by a geodesic $\Gamma$ contained in $\Sigma$,
except for its endpoints on $\partial\Sigma$. If $\partial\Sigma$ is not a simple curve, then $\Sigma$ lies on a catenoid.
Equality holds in (\ref{g-convex}) along a geodesic $\Gamma$ that coincides with the unique geodesic on a catenoid that
is a circle of revolution.

\sm \noi (iv) The condition (\ref{g-convex}) is sharp.

\end{thm}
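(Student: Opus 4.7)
The plan is to reduce Theorem 6.3 to Theorems 4.1, 5.3 and 6.2 by passing to a conformal Weierstrass--Enneper parametrization. Let $\wf\colon\D\to\Sigma$ be such a parametrization and endow $\D$ with the pulled-back induced metric $\mathbf{g}=\wf^{*}g_{\R^{3}}=e^{2\sigma}\mathbf{g}_{0}$, so $\wf$ becomes an isometry of $(\D,\mathbf{g})$ onto $\Sigma$. Geodesic convexity of $\Sigma$ and its intrinsic diameter $\delta$ then translate verbatim to the corresponding properties of $\mathbf{g}$.

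First I would apply Theorem 4.1 with the choice $\rho=\sigma$. By formula (2.2), $\S f = 2(\sigma_{zz}-\sigma_{z}^{2})$, so the Schwarzian term in (\ref{injectivity}) disappears. Because minimal surfaces satisfy $K\leq 0$, the curvature formula (\ref{eq:curvature}) together with $\Delta\sigma = 4\sigma_{z\bar z}$ gives $\sigma_{z\bar z}=\tfrac14 e^{2\sigma}|K|$, and condition (\ref{injectivity}) collapses to
$$
e^{2\sigma}|K|\,\leq\,\frac{2\pi^{2}e^{2\sigma}}{\delta^{2}}+\tfrac12 e^{2\sigma}|K|,
$$
which is exactly (\ref{g-convex}). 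Theorem 4.1 therefore delivers the injectivity of $\wf$, proving (i). (The degenerate case $\delta=\infty$ forces $K\equiv 0$, so $\Sigma$ is planar and trivially embedded.)

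For (ii), assuming $\delta<\infty$, I would verify that $\mathbf{g}$ satisfies (ULP*). Non-positive curvature of $\mathbf{g}$ gives uniqueness of limit points and monotone dependence on the initial direction, while the uniform bound on $|K|$ coupled with geodesic convexity should yield the continuity of both the endpoint map and the length function of maximally extended geodesic rays. Theorem 5.3 then produces the spherically continuous extension of $\wf$ to $\overline{\D}$, and because $\wf$ is already injective on $\D$, pushing $\partial\D\cong\mathbb{S}^{1}$ forward to $\partial\Sigma$ yields (ii).

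For (iii), the hypothesis that any two points of $\partial\Sigma$ can be joined by an interior geodesic is precisely property (BPJ) for $\mathbf{g}$, and non-simplicity of $\partial\Sigma$ means the extension of $\wf$ fails to be injective on $\partial\D$, i.e., $\wf$ is an extremal lift in the sense of Definition 6.1. Theorem 6.2 then forces equality in (\ref{injectivity}), hence in (\ref{g-convex}), along the extremal geodesic, and $\Sigma$ to lie on a catenoid. Since the image of the extremal geodesic is a Euclidean circle that is simultaneously a line of curvature, and on a catenoid the only circular lines of curvature are the circles of revolution, the extremal geodesic must coincide with such a circle. Part (iv) then follows by displaying a symmetric neighborhood of the neck of a suitably scaled catenoid: the neck circle is a geodesic realizing the intrinsic diameter, and $|K|$ there saturates (\ref{g-convex}). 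The main obstacle will be the verification of (ULP*) in step (ii), in particular the continuity of the length function of maximally extended geodesic rays near $\partial\D$ in the non-complete case; the remaining portions are essentially bookkeeping translations through the conformal parametrization.
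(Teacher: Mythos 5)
Your part (i) matches the paper's proof exactly: with $\rho=\sigma$ the Schwarzian term in (\ref{injectivity}) cancels and $2\sigma_{z\bar z}=\tfrac12 e^{2\sigma}|K|$ turns (\ref{injectivity}) into (\ref{g-convex}), which is precisely the "direct calculation" the paper invokes, and the $\delta=\infty$ case is handled the same way. The divergence — and the gap — is in (ii)--(iv). You route (ii) through Theorem 5.3 and (iii) through Theorem 6.2, which requires verifying (ULP*) (in particular the continuity of the length function of maximally extended geodesics) for the non-complete induced metric $e^{2\sigma}\mathbf{g}_0$; you yourself flag this as "the main obstacle" and leave it unproved, so as it stands the argument for (ii) and (iii) is not closed. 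The paper never verifies (ULP*) here. Instead it exploits an extrinsic fact your proposal misses: since geodesics of a minimal surface have space curvature $|k|=|I\!I(V,V)|\le\sqrt{|K|}\le 2\pi/\delta$, maximally extended geodesics from a base point have uniformly controlled tails in $\mathbb{R}^3$, which directly gives unique limit points on $\partial\Sigma$, continuous dependence on the initial direction, and surjectivity onto $\partial\Sigma$ — i.e.\ part (ii) without any appeal to Theorem 5.3. The same bound powers (iii): for the limiting closed geodesic $\Gamma$ obtained from the joining hypothesis, the arc-length parametrization satisfies $\S_1\varphi=\tfrac12 k^2\le 2\pi^2/\delta^2$, and Remark 3.1 together with Theorem B forces $l(\Gamma)=\delta$, $\S_1\varphi\equiv 2\pi^2/\delta^2$, and $\Gamma$ a circle and line of curvature, after which the Bj\"orling/catenoid argument of Theorem 6.2(iii) applies. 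So the missing idea is precisely the use of the second fundamental form bound on geodesics viewed as curves in space, which substitutes for the unverified (ULP*).

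There is also a smaller gap in (iv): exhibiting the geodesic ball of radius $\pi$ about a point of the neck of the catenoid $x=\cosh z$ shows only that equality in (\ref{g-convex}) can be attained by an embedded example. Sharpness of the constant additionally requires showing that $4\pi^2/\delta^2$ cannot be enlarged, which the paper does by taking the analogous geodesic ball of radius $r>\pi$, violating embeddedness; your sketch does not address this second half.
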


\begin{proof}

In the proof, we may assume that $\delta<\infty$, for otherwise $\Sigma$ reduces to a plane.

\sm \noi (i) Let $\wf$ be a conformal parametrization of $\Sigma$ defined on $\D$. We need to show
that $\wf$ is injective. Because $\Sigma$ is geodesically convex with diameter $\delta$, it follows that
pairs of points in $\D$ can be joined by a geodesic in the metric $e^{2\sigma}\mathbf{g}_0$ of length less than
$\delta$. A direct calculation shows that (\ref{g-convex}) corresponds to (\ref{injectivity}), and thus
$\wf$ is injective.

\sm \noi (ii) As a curve in space, the curvature $k$ of a geodesic $\Gamma\subset\Sigma$ is determined by the second
fundamental form $I\!I$, which is bounded by $\sqrt{|K|}$. Hence all such geodesics have uniformly bounded
curvature. Fix a base point $w_0\in\Sigma$, and consider geodesics with initial point $w_0$ as a function
of the angle $\theta\in\mathbb{S}^1$ on the tangent space at $w_0$ of the initial direction. Geodesics
cannot intersect and will leave any compact subset. Because the curvature $k$ is bounded, it is easy to see
that each such geodesic $\Gamma$ extended maximally will converge to a unique limit point $\zeta_{\theta}\in\partial\Sigma$.
Because two geodesics with sufficiently close initial data remain arbitrarily close on a given compact set,
and again because the remaining tails are controlled by the bound on its curvatures, it follows that the
limit point $\zeta_{\theta}$ depends continuously on $\theta$. It is also easy to see that any point on
$\partial\Sigma$ is of this form, proving part (ii) of the theorem.

\sm \noi (iii) Suppose that $\partial\Sigma$ is not simple. Hence there exist two values $\theta_1, \theta_2$
for which $\zeta_{\theta_1}=\zeta_{\theta_2}$. We may assume that the correspondence $\theta\rightarrow\zeta_{\theta}$
is one-to-one for $\theta_1<\theta<\theta_2$. By assumption, for small $\epsilon>0$, there exists a geodesic $\Gamma_{\epsilon}$ joining
the points $\zeta_{\theta_1+\epsilon}$ and $\zeta_{\theta_2-\epsilon}$, and as $\epsilon\rightarrow 0$, $\Gamma_{\epsilon}$ will
converge to a geodesic $\Gamma$ in $\Sigma$ closing up at $\zeta_{\theta_1}=\zeta_{\theta_2}$. Then
$l(\Gamma)\leq\delta$ and its curvature $k$ satisfies
$$|k|\leq\sqrt{|K|}\leq\frac{2\pi}{\delta} \, .$$
Consider an arc-length parametrization $\varphi:I\rightarrow\Gamma$, defined on an interval of length at most $\delta$.
Then
$$\S_1\varphi=\frac12k^2\leq\frac{2\pi^2}{\delta^2} \, .$$
As argued in the proof of part (i) of Theorem 6.2, we conclude that $l(\Gamma)=\delta$, $\S_1\varphi\equiv2\pi^2/\delta^2$,
and that $\Gamma$ is a circle. It is also a line of curvature because $I\!I$ must be maximal long $\Gamma$. This proves
part (iii).

\sm \noi (iv) The criterion is sharp in the following two senses. First of all, a configuration is possible
for which (\ref{g-convex}) holds with equality along a geodesic: consider $\Sigma$ to be a geodesic
ball of radius $\pi$ centered at a point $w_0$ on the geodesic of revolution $\Gamma$ of the catenoid obtained by rotation of the
curve $x=\cosh(z)$. The Gaussian curvature along $\Gamma$ is 1 in absolute value, so equality will hold in (\ref{g-convex})
because $\delta=2\pi$.
Everywhere else on $\Sigma$, $|K|<1$, so the criterion is satisfied.

On the other hand, the criterion is also sharp in the sense that the constant $4\pi^2/\delta^2$ cannot be improved; simply take
a geodesic ball as above of radius $r>\pi$ to violate an embedding.

\end{proof}

\section{Extensions to Space}

The purpose of this section is to derive an extension of certain lifts to
the entire 3-space that represents an analogue of the Ahlfors-Weill construction. The extension will be a consequence
of setting up appropriate circle bundles in domain and range that can be matched, for example,
by M\"obius transformations. By appealing to generalized best M\"obius approximations to the lift, a rather explicit
extension was obtained in \cite{AW} when $\mathbf{g}$ is the Poincar\'e metric, which under natural additional assumptions,
was shown to be quasiconformal. We will not pursue here similar considerations of quasiconformality. To establish the results, we
will assume that the conformal metric in $\mathbb{D}$ is {\it complete} and that it satisfies the conditions (ULP) and (BPJ). We will introduce
a variant of the notion of being non-extremal that will be satisfied, for example, whenever strict inequality holds in $(\ref{injectivity})$, and
which will be equivalent to being non-extremal when the metric is real analytic.

Throughout this section, $\wf$ will be assumed to satisfy (\ref{injectivity}). Lemma 5.1 ensures that $u_{\wf}$ is convex in the metric $\mathbf{g}$,
and because this property is based on estimating Ahlfors' Schwarzian, the functions $u_{M\circ\wf}$ will also be convex whenever $M$ is
a M\"obius transformation.
We will say that the {\it unique critical point property} (UCP) holds  if for every such shift, the function
$u_{M\circ\wf}$ exhibits at most one critical point in $\D$. If $\wf$ satisfies $(\ref{injectivity})$ with
a strict inequality everywhere, then $u_{\wf}$ and all $u_{M\circ\wf}$ will be strictly convex. Hence at most one critical point can occur
and the (UCP) condition will hold.

We first establish the connections between the (UCP) property and that of being non-extremal.
Recall that the lift $\wf$ admits a spherically continuous extension to the closed disk.

\begin{lemma} { Let $\zeta\in\partial\D$ and suppose that $\wf(\zeta)$ is a finite point. If $\gamma$ is a geodesic ray in $\D$ ending
at $\zeta$ then $\wf(\gamma)$ has finite length.}
\end{lemma}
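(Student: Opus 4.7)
The plan is to combine the convexity of $u_{\wf}$ along $\mathbf{g}$-geodesics (Lemma 5.1 in the complete case, where $\delta=\infty$) with a M\"obius post-composition that forces linear growth of the analogous function. Parametrize $\gamma$ by $\mathbf{g}$-arclength $s\in[0,\infty)$ and set $U(s)=u_{\wf}(\gamma(s))$. Because $e^{\sigma-\rho}=u_{\wf}^{-2}$, the Euclidean length of $\wf(\gamma)$ equals $\int_0^{\infty}U(s)^{-2}\,ds$, while Lemma 5.1 yields $U''\geq 0$ on $[0,\infty)$.

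By Theorem 5.3, $\wf$ extends spherically continuously to $\overline{\D}$, so the hypothesis $\wf(\zeta)\in\mathbb{R}^3$ places $\wf(\gamma)$ together with its limit point inside some closed Euclidean ball $B\subset\mathbb{R}^3$. Following the strategy of the complete case in the proof of Theorem 5.3, I would post-compose $\wf$ with a M\"obius transformation $M$ of $\mathbb{R}^3$ chosen so that (i) the pole of $M$ lies outside $B$, and (ii) the function $V(s):=u_{M\circ\wf}(\gamma(s))$ satisfies $V'(0)\geq c>0$. M\"obius invariance of Ahlfors' Schwarzian for curves ensures that the bound on $\S_1$ used in the proof of Lemma 5.1 is inherited by $M\circ\wf$, so $V$ is also convex on $[0,\infty)$.

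Convexity of $V$ with $V(0)>0$ and $V'(0)\geq c$ yields $V(s)\geq V(0)+cs$, hence
$$\int_0^{\infty}\frac{ds}{V(s)^2}\,\leq\,\int_0^{\infty}\frac{ds}{(V(0)+cs)^2}\,<\,\infty,$$
which is precisely the Euclidean length of $M\circ\wf(\gamma)$. Since $M|_B$ is bi-Lipschitz by (i), the lengths of $\wf(\gamma)$ and $M\circ\wf(\gamma)$ differ by only a bounded factor, so $\wf(\gamma)$ has finite length as well.

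The main obstacle is the simultaneous satisfaction of (i) and (ii). Post-composition with $M$ modifies $\nabla u_{\wf}(z_0)$ by the gradient of $\tfrac{1}{2}\log|JM\circ\wf|$ at $z_0$, and the ten-parameter family of M\"obius transformations of $\mathbb{R}^3$ supplies enough freedom to prescribe an arbitrary nonzero initial gradient for $V$ at $z_0=\gamma(0)$ while keeping the pole of $M$ outside the bounded set $B$; in particular, its component along $\gamma'(0)$ can be arranged to be positive, giving the required $V'(0)\geq c>0$.
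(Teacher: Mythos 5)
Your overall strategy is the same as the paper's: post-compose with a M\"obius transformation $M$ so that the canonical function has positive initial derivative along $\gamma$, use the convexity from Lemma 5.1 (complete case) to get linear growth, hence finite Euclidean length of $M\circ\wf(\gamma)$, and then transfer finiteness back to $\wf(\gamma)$. The gap is exactly at the step you yourself flag as the main obstacle, and the justification you give for it does not work. The canonical function of a shift depends on $M$ only through its pole: $u_{M\circ\wf}=c\,|\wf-q|\,u_{\wf}$ if $M$ has pole $q\in\mathbb{R}^3$, and $u_{M\circ\wf}=c\,u_{\wf}$ if $M$ is affine, with $c>0$ a constant. So for the purpose of changing the sign of $V'(0)$ the ten-parameter family collapses to the three-parameter choice of $q$. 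Writing $U(s)=u_{\wf}(\gamma(s))$, $w_0=\wf(z_0)$, and noting $\left|\tfrac{d}{ds}\wf(\gamma(s))\right|_{s=0}=e^{(\sigma-\rho)(z_0)}=U(0)^{-2}$, one gets $V'(0)\le U(0)^{-1}-|w_0-q|\,|U'(0)|$ whenever $U'(0)<0$ (the only case in which a shift is needed at all). Hence every admissible pole lies within distance $\bigl(u_{\wf}(z_0)\,|U'(0)|\bigr)^{-1}$ of $w_0\in B$, a bounded set that may well be contained in $B$, since a priori $\wf(\gamma)$ is only known to be bounded and its diameter can be large. So conditions (i) and (ii) can be incompatible, and the claim that an arbitrary nonzero initial gradient can be prescribed with pole outside $B$ is false in general.

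The repair is cheap, and it is where the paper argues differently and more efficiently: you do not need the pole outside $B$, only off the compact set $\overline{\wf(\gamma)}=\wf(\gamma)\cup\{\wf(\zeta)\}$, so that $M$ is bi-Lipschitz on a neighborhood of it. The paper gets this a posteriori: take any $M$ with $V'(0)>0$; then $M\circ\wf(\gamma)$ has finite length, and if the pole of $M$ were at $\wf(\zeta)$ (here the hypothesis that $\wf(\zeta)$ is finite enters), the image curve would tend to infinity and have infinite length, a contradiction; the same reasoning excludes the pole lying on $\wf(\gamma)$, and the affine case is immediate. Finiteness of length then pulls back through $M^{-1}$, which is Lipschitz near the compact image curve. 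Alternatively, you could salvage your version by weakening (i): the set of admissible poles is a nonempty open subset of $\mathbb{R}^3$, hence cannot be contained in the curve $\overline{\wf(\gamma)}$, so an admissible pole avoiding $\overline{\wf(\gamma)}$ always exists, and that is all your bi-Lipschitz transfer actually requires.
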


\begin{proof} Let $z_0$ be the initial point of $\gamma$ and let $M$ be a M\"obius transformation such that $u_{M\circ\wf}$ has
positive derivative at $z_0$ in the direction of $\gamma$. It follows as  in (5.2) that $M\circ\wf(\gamma)$ has finite length. If $M$ fixes infinity,
that is, is affine, then$\wf(\gamma)$ will also have finite length. On the other hand, if $M$ is an inversion with some center $q\in\mathbb{R}^3$,
then $q\neq \wf(\zeta)$, for otherwise $M\circ\wf(\gamma)$ would have infinite length. We conclude that $\wf(\gamma)$ also
has finite length.

\end{proof}

\begin{lemma} {Suppose that $\mathbf{g}$ is real-analytic and that $\wf$ is non-extremal. Then (UCP) holds.}
\end{lemma}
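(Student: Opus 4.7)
The plan is to argue by contradiction. Suppose that for some M\"obius transformation $M$ of $\R^3$ the function $u=u_{M\circ\wf}$ has two distinct critical points $z_1,z_2\in\D$. By completeness of $\mathbf{g}$, there is a unit-speed bi-infinite $\mathbf{g}$-geodesic $\gamma\colon\R\to\D$ with $\gamma(s_i)=z_i$. Since the Ahlfors Schwarzian is invariant under M\"obius postcomposition, Lemma 5.1 applies equally to $M\circ\wf$, and with $\dl=\infty$ yields that $U(s):=u(\gamma(s))$ is positive and satisfies $U''\ge 0$. The vanishing of $\nabla u$ at $z_1,z_2$ forces $U'(s_1)=U'(s_2)=0$, and convexity then forces $U'\equiv 0$ and hence $U\equiv c>0$ throughout $[s_1,s_2]$.

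Real-analyticity of $\rho$ makes the geodesic ODE real-analytic, so $\gamma$ extends to a real-analytic curve on all of $\R$. The Weierstrass-Enneper formulas show $\wf$ is real-analytic, and composition with $M$ preserves this on the open set where $\wf$ avoids the pole of $M$; consequently $U$ is a real-analytic function on $\R$ that is constant on the nontrivial interval $[s_1,s_2]$, so $U\equiv c$ on all of $\R$. The identity $U''+\tfrac12(\S\tau)U=0$ from the proof of Lemma 5.1 now gives $\S\tau\equiv 0$ along $\gamma$, and equation (3.4) reduces to $\S_1\psi=\tfrac12 v_\psi^2 k_\psi^2$ for $\psi:=M\circ\wf\circ\gamma$. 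Combined with the global bound $\S_1\psi\le 2\pi^2/\dl^2=0$, this forces $k_\psi\equiv 0$ and $|\psi'|\equiv c^{-2}$, so $\psi$ traces a Euclidean straight line in $\R^3$ traversed at constant speed, and in particular $\psi(s)\to\infty$ in $\R^3\cup\{\infty\}$ as $s\to\pm\infty$.

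By (ULP) the bi-infinite geodesic $\gamma$ admits Euclidean boundary limits $\zeta_\pm\in\partial\D$ at its two ends, and the Cartan-Hadamard character of $(\D,\mathbf{g})$ together with (BPJ) separates the two asymptotic classes, so $\zeta_+\ne\zeta_-$ (two rays from $\gamma(0)$ going in opposite directions are not asymptotic, since $d_\mathbf{g}(\gamma(s),\gamma(-s))=2s\to\infty$, and under (ULP)+(BPJ) this separation persists on $\partial\D$). By Theorem 5.3, $\wf$ extends spherically continuously to $\overline\D$, so $\wf(\zeta_\pm)=\lim_{s\to\pm\infty}\wf(\gamma(s))$. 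Since $\psi(s)\to\infty$ at both ends, we obtain $\wf(\zeta_+)=\wf(\zeta_-)=M^{-1}(\infty)$, contradicting the assumed non-extremality of $\wf$. The principal obstacle is the analytic-continuation step, using real-analyticity to propagate the constancy of $U$ from the closed segment $[s_1,s_2]$ to all of $\R$, together with the related verification that the two ends of $\gamma$ yield distinct points of $\partial\D$ where $\wf$ fails to be injective.
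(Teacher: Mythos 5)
Your first half runs along the same lines as the paper: convexity of $U=u_{M\circ\wf}\circ\gamma$ along the geodesic through the two critical points forces $U$ to be constant between them, and real-analyticity propagates the constancy along the whole (complete) geodesic; from this both ends of $\wf(\gamma)$ are sent to $\infty$ by $M$ (the paper gets this from $|d(M\circ\wf)|=\mathrm{const}\cdot e^{\rho}$, infinite length and Lemma 7.1, you get it from $\mathcal{S}\tau\equiv 0$, $k\equiv 0$ and constant speed; either route is fine, and your convexity remark that $U\geq c>0$ correctly keeps $\wf(\gamma)$ away from the pole of $M$).

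The genuine gap is the assertion that $\zeta_+\neq\zeta_-$. Divergence of $d_{\mathbf{g}}(\gamma(s),\gamma(-s))$ tells you the two rays separate in the \emph{ideal} (Cartan--Hadamard) boundary, but $\partial\D$ here is the \emph{Euclidean} boundary, and (ULP) only asserts existence, continuity and surjectivity of the map from initial directions to Euclidean limit points --- not injectivity. Indeed the paper's discussion after Definition 5.2 explicitly contemplates metrics for which many (even all) directions share the same limit point; non-positive curvature makes the endpoint map monotone but possibly locally constant, and (BPJ) only says boundary points can be joined by geodesics, not that a complete geodesic cannot have both ends converging to the same $\zeta\in\partial\D$. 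This degenerate case is exactly what the second half of the paper's proof is about: if $\zeta_1=\zeta_2$, one shows $u_{M\circ\wf}$ is constant on the region $D$ enclosed by $\gamma$ (every geodesic from a point of $\gamma$ into $D$ also ends at $\zeta_1$, and $u_{M\circ\wf}$ cannot become increasing along it since $M\circ\wf(\zeta_1)=\infty$), hence by the identity principle constant on $\D$, so $M\circ\wf\equiv\infty$ on $\partial\D$; a contradiction is then extracted either because the topological sphere $\wf(\overline{\D})$ would have points of positive Gaussian curvature, or, when no shift is needed, because $\rho=\sigma$ forces $K\equiv 0$ and one is reduced to the holomorphic case of \cite{co:extremal}. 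Without an argument of this kind your proof does not reach extremality, since extremality requires two \emph{distinct} boundary points with the same image; the parenthetical appeal to ``(ULP)+(BPJ) separation'' does not supply it.
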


\begin{proof} Suppose, by way of contradiction, that (UCP) does not hold. Then there exists $M$ such that $u_{M\circ\wf}$ has
two critical points, say $z_1, z_2\in\D$.  Because of convexity, $u_{M\circ\wf}$ attains its absolute minimum at $z_1, z_2$ and
also along the geodesic segment joining them.
Since the quantities involved are real analytic, we conclude that $u_{M\circ\wf}$ is constant along the entire geodesic $\gamma$
through $z_1, z_2$ extended up to the boundary in both directions to points $\zeta_1, \zeta_2\in\partial\D$. This implies that, up to
a constant factor, $|d(M\circ\wf)|=e^{\rho}$ along $\gamma$, and hence $M\circ\wf(\gamma)$ has infinite length in both directions. From Lemma 7.1
we see that $M\circ\wf(\zeta_1)=M\circ\wf(\zeta_2)$ must be the point at infinity. To conclude that
$\wf$ is extremal, we must show that the endpoints $\zeta_1, \zeta_2$ of the geodesic $\gamma$ are distinct. Suppose not, and let $D\subset\D$
be the region enclosed by $\gamma$ and its endpoint $\zeta_1=\zeta_2$.  The all geodesics starting from a fixed point $z_0\in \gamma$ pointing into $D$
must also converge to $\zeta_1$. If along any such geodesic the function $u_{M\circ\wf}$ became eventually increasing, then $M\circ\wf$ would
be finite at $\zeta_1$, a contradiction. Since $u_{M\circ\wf}(z_0)$ is already the minimum, then $u_{M\circ\wf}$ must be constant along all
such geodesics, or equivalently, $u_{M\circ\wf}$ is constant in $D$. The identity principle show that $u_{M\circ\wf}$ is constant in $\D$, and therefore
$M\circ\wf$ is constant and equal to infinity on $\partial\D$. The original lift $\wf$ is also constant on $\partial\D$. If this constant
is a finite point then the topological sphere $\wf(\overline{\D})$ would exhibit points of positive Gaussian curvature, which is impossible.
If the constant is the point at infinity, then the shift $M$ was not necessary to begin with, and $\wf$ satisfies (\ref{injectivity}) with $\rho=\sigma$.
The inequality forces the Gaussian curvature to be identically zero and we are back to the holomorphic case treated in \cite{co:extremal}.

\end{proof}

We now show the complementary implication.

\begin{lemma} {Suppose that  the (UCP) property holds. Then $\wf$ is not extremal.}
\end{lemma}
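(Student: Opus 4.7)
I would argue by contradiction: assuming $\wf$ is extremal (while (UCP) holds), I construct a M\"obius transformation $M$ of $\R^3$ so that $u_{M\circ\wf}$ has more than one critical point in $\D$, violating (UCP). Extremality gives distinct $\zeta_1,\zeta_2\in\partial\D$ with $\wf(\zeta_1)=\wf(\zeta_2)=p$, and (BPJ) furnishes a $\mathbf{g}$-geodesic $\gamma\colon\R\to\D$ joining them, parametrized by arc length so that $\gamma(s)\to\zeta_1,\zeta_2$ as $s\to-\infty,+\infty$. Because Ahlfors' Schwarzian, the convexity of $u_{M\circ\wf}$ from Lemma 5.1, and the (UCP) hypothesis are each invariant under post-composition by a M\"obius transformation of $\R^3$, a preliminary inversion centered at $p$ (or the identity when $p=\infty$) lets me arrange $\wf(\zeta_1)=\wf(\zeta_2)=\infty$ from the outset.

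Next I would show that $\wf\circ\gamma$ traces out a Euclidean line or circle. The computation in the proof of Theorem~\ref{main result} applied along $\gamma$ with $\delta=\infty$ gives $\S_1(\wf\circ\gamma)\le 0$ throughout $\R$. If strict inequality held on any open subinterval, Theorem~A with a slightly enlarged comparison $p(x)\equiv\pi^2/l^2$ on bounded subintervals, coupled with a Theorem~B-type limiting argument, would force $\wf\circ\gamma$ to remain injective across the ends $\pm\infty$, contradicting $\wf\circ\gamma(\pm\infty)=\infty$. Hence $\S_1(\wf\circ\gamma)\equiv 0$, and the extremal characterization for Ahlfors' Schwarzian identifies the image as a Euclidean line or circle.

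A second M\"obius transformation (still fixing $\infty$) then straightens this image into a line; the identity $\S_1=\S(s)+\tfrac12 v^2 k^2$ from~(3.4), together with the vanishing of both $\S_1$ and the curvature $k$, gives $\S(s)\equiv 0$ for the Euclidean speed $v$. Positivity of $v=u_{M\circ\wf}^{-2}$ on all of $\R$ rules out poles arising from $\S(s)=0$, so $v$ must be a positive constant and $u_{M\circ\wf}$ is constant along $\gamma$. Convexity of $u_{M\circ\wf}$ on $(\D,\mathbf{g})$ combined with its constancy on a bi-infinite geodesic then places $\gamma$ in the minimum set of $u_{M\circ\wf}$, so every interior point of $\gamma$ is a critical point---a 1-parameter family---contradicting (UCP).

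The delicate step will be the last one: a priori, constancy of $u_{M\circ\wf}$ along $\gamma$ only kills the tangential component of $\nabla u_{M\circ\wf}$, and the transverse component must be controlled independently. I would exploit the reflection symmetry $\zeta_1\leftrightarrow\zeta_2$ inherent to the extremal configuration, together with the residual affine freedom in the choice of $M$, to annihilate the transverse component along $\gamma$ and thereby complete the contradiction.
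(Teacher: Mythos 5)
Your first three steps track the paper's argument (and usefully spell out details the paper compresses into ``as we have seen before''): send the common boundary value $\wf(\zeta_1)=\wf(\zeta_2)$ to infinity, use the extremal characterization along the geodesic $\gamma$ to get $\S_1\equiv 0$ and a straight-line image, and conclude from $u''=0$ plus positivity on all of $\R$ that $u_{M\circ\wf}$ is constant along $\gamma$. The problem is exactly the step you yourself flag as delicate, and your plan for it does not work. Constancy of $u_{M\circ\wf}$ on $\gamma$ only kills the tangential derivative, and the transverse derivative cannot be removed by ``residual affine freedom'' in $M$: every M\"obius transformation of $\R^3$ sending the common point to $\infty$ is a Euclidean similarity composed with the inversion centered at that point, and postcomposition by a similarity multiplies $e^{\sigma}$, hence $u_{M\circ\wf}=\sqrt{e^{\rho-\sigma}}$, by a positive constant. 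So the normal derivative of $u_{M\circ\wf}$ along $\gamma$ is determined up to a positive factor once the common point is chosen; no choice of $M$ can annihilate it. Nor is there a usable ``reflection symmetry $\zeta_1\leftrightarrow\zeta_2$'': the catenoid image is symmetric about the plane of its waist circle, but $u_{\wf}$ also involves the conformal factor $e^{\rho}$ of the domain metric, which carries no such symmetry in general. As it stands, your argument does not produce even one critical point of $u_{M\circ\wf}$ in $\D$, so no contradiction with (UCP) is reached.

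The paper closes this gap by a different mechanism. Assume $u_{M\circ\wf}$ has no critical point on $\gamma$; then its gradient along $\gamma$ is purely normal and of constant sign, so $u_{M\circ\wf}\le c$ on one of the two components $D_1$ of $\D\setminus\gamma$, which gives $|d(M\circ\wf)|\ge c^{-2}e^{\rho}$ there. Completeness of $\mathbf{g}$ then forces $M\circ\wf$ to have infinite length along every geodesic ray into $D_1$, so by (the M\"obius-invariant form of) Lemma 7.1 the extension of $M\circ\wf$ equals $\infty$ on the whole boundary arc $\partial\D\cap\partial D_1$; hence $\wf$ is constant on that arc, the minimal surface $\wf(D_1)$ is bounded by a circle and degenerates to a plane, and the statement reduces to the holomorphic case treated in \cite{co:extremal}, where the constant $c$ is shown to be the minimum. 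Only then does convexity convert constancy of $u_{M\circ\wf}$ along $\gamma$ into a geodesic of critical points, contradicting (UCP). You would need to supply an argument of this kind (or some genuinely new idea) for the transverse component; the symmetry/affine-freedom route cannot be repaired.
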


\begin{proof} Suppose that $\wf$ is extremal, and let $\gamma$ be a geodesic in $\D$ joining points $\zeta_1, \zeta_2\in\partial\D$ for which
$\wf(\zeta_1)=\wf(\zeta_2)$. Consider a M\"obius shift $M$ sending the common point to infinity. As we have seen before,  $u_{M\circ\wf}$ must be
constant along $\gamma$. To prove that the constant value, say $c$, is the minimum, it suffices to show that $u_{\wf}$ has a critical point on $\gamma$.  Let
$D_1, D_2$ be the two region into which $\gamma$ divides $\D$, and suppose that $u_{M\circ\wf}$ has no critical point along $\gamma$. Then the normal derivative
of $u_{M\circ\wf}$ along $\gamma$ must keep a constant sign, thus  $u_{M\circ\wf}$ must
be decreasing when moving away from $\gamma$ in the direction of, say, $D_1$. In other words, $u_{M\circ\wf}(z)\leq c$ for all $z\in D_1$, hence
$|d\wf(z)|\geq (1/c^2)e^{\rho}$ there. This implies that $M\circ\wf(\zeta)=\infty$ for all points on the arc $C=\partial\D\cap\partial D_1$.
Hence $\wf$ itself is constant on $C$. Since $\gamma$ is an extremal geodesic, we conclude that $\wf(D_1)$ is a minimal surface with boundary
a circle. This is readily seen to imply that the minimal surface must reduce to a plane, and we are back in the holomorphic case found in
\cite{co:extremal} where it is shown that, indeed, $c$ is the minimum value. This finishes the proof of the lemma.

\end{proof}

Let us now consider the following type of bundles of circles that fibre 3-space.
As a general configuration, let $B$ be a smooth, open surface in $\R$, and consider a family $\mathfrak{C}(B)$ of
Euclidean circles $C_p$ indexed by $p \in B$, at most one of which is a Euclidean line, having the properties:

\medskip
\noindent
(i) $C_p$ is orthogonal to $B$ at $p$ and $C_p \cap \overline{B}= \{p\}$;

\smallskip
\noindent(ii)  if $p_1 \ne p_2$ then $_{p_1}\cap C_{p_2} = \emptyset$;

\smallskip
\noindent(iii) $\bigcup_{p \in B} C_{p} = \mathbb{R}^3\setminus \partial B$.

\smallskip

We regard the point at $\infty$ as lying on the line in $\mathfrak{C}(B)$.
We refer to $p\in C_p$ as the base point. If $B$ is unbounded then there is no line in $\mathfrak{C}(B)$, for a line
would meet $\overline{B}$ at its base point and at the point at infinity, contrary to (i).

\medskip

The model case is $B=\D$, with $\mathfrak{C_0}=\mathfrak{C}(\D)$ being the collection of  circles $C_z$ orthogonal to the complex
plane passing through $z\in\D$ and its reflection $1/\bar{z}$. In this case, only the circle through the origin becomes a line.
In order to set up a bundle of this type when $B=\wf(\mathbb{D})$ will require (UCP) to hold. The bundle will be established
through a series of lemmas, and
it will be necessary to shift the lift $\wf$ by suitable
M\"obius transformations $M=M_q$ of the form
$$M(p)=\frac{p-q}{|p-q|^2} \, ,$$
for which the canonical function is given by
$$
u_{M\circ\wf}=|\wf-q|\,u_{\wf} \, .
$$

\begin{lemma} Let $\wf$ satisfy (\ref{injectivity}) and let $z_0\in\mathbb{D}$ be fixed.
Consider the set $C$ of points $q\in\mathbb{R}^3$
for which  $u_{M\circ\wf}$ has a critical point at $z_0$. Then

\smallskip
\noindent
(i) $C$ is a circle orthogonal to $\Sigma$ at $\wf(z_0)$ with radius $r(z_0)=
\displaystyle{\frac{e^{\sigma(z_0)}}{|\nabla \log u_{\wf}(z_0)|}}$;

\smallskip
\noindent(ii)  $C$ is symmetric with respect to the tangent plane to $\Sigma$ at $\wf(z_0)$;

\smallskip
\noindent(iii) $(C\setminus\{\wf(z_0)\})\cap\overline{\Sigma}=\emptyset$.

\end{lemma}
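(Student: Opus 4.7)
My plan is to convert the critical-point condition into an algebraic equation on $q$ via the identity $u_{M_q\circ\wf}(z) = |\wf(z) - q|\,u_{\wf}(z)$, and then read off the geometry of $C$ by working in an orthonormal frame of $\mathbb{R}^3$ adapted to $w_0 := \wf(z_0)$.

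Since $\wf$ is a conformal immersion, $T_1 = e^{-\sigma(z_0)}\wf_x(z_0)$ and $T_2 = e^{-\sigma(z_0)}\wf_y(z_0)$ are orthonormal and span $T_{w_0}\Sigma$; together with the unit normal $\mathbf{n}$ they form an orthonormal basis of $\mathbb{R}^3$. Taking the Euclidean gradient at $z_0$ of $\log u_{M_q\circ\wf} = \log|\wf - q| + \log u_{\wf}$ and setting it to zero yields the pair
\[
(q - w_0)\cdot \wf_{x_i}(z_0) \;=\; G_i\,|q - w_0|^2, \qquad i=1,2,
\]
where $G_i = \partial_{x_i}\log u_{\wf}(z_0)$. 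Writing $q - w_0 = \alpha T_1 + \beta T_2 + \gamma\mathbf{n}$, these equations force $(\alpha,\beta)$ to be a positive multiple of $(G_1,G_2)$, so the tangential component of $q - w_0$ is automatically aligned with the unit tangent vector $\hat v = (G_1 T_1 + G_2 T_2)/|\nabla\log u_{\wf}(z_0)|$. Setting $A := \sqrt{\alpha^2 + \beta^2}$ and $B := \gamma$, the pair $(A,B)$ satisfies a single relation $A^2 + B^2 = cA$ with $c$ an explicit positive constant built from $e^{\sigma(z_0)}$ and $|\nabla\log u_{\wf}(z_0)|$. Completing the square exhibits $C$ as a circle in the plane through $w_0$ spanned by $\hat v$ and $\mathbf{n}$, passing through $w_0$ with diameter along $\hat v$ and tangent there to the normal line; tangency to $\mathbf{n}$ at $w_0$ is orthogonality to $\Sigma$ at $w_0$, and the radius matches $r(z_0)$, proving (i). Claim (ii) is then transparent: the plane containing $C$ is orthogonal to $T_{w_0}\Sigma$, and reflection across $T_{w_0}\Sigma$ fixes $\hat v$ and reverses $\mathbf{n}$, hence preserves $C$.

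For (iii) I would invoke convexity. M\"obius-invariance of Ahlfors' $\S_1$ propagates (\ref{injectivity}) from $\wf$ to $M_q\circ\wf$, and Lemma 5.1 (with $\delta = \infty$, since $\mathbf{g}$ is complete in this section) shows that $u_{M_q\circ\wf}$ is convex along every $\mathbf{g}$-geodesic. For $q \in C$, the critical point at $z_0$ is therefore a global minimum of $u_{M_q\circ\wf}$, with strictly positive value $|w_0 - q|\,u_{\wf}(z_0)$ whenever $q \neq w_0$. If $q = \wf(z_1) \in \Sigma\setminus\{w_0\}$, then $u_{M_q\circ\wf}(z_1) = 0$, contradicting the positivity of the minimum. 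For $q \in \partial\Sigma$, the continuous extension of $\wf$ to $\overline{\D}$ (Theorem 5.3) furnishes $\zeta \in \partial\D$ with $\wf(\zeta) = q$; Lemma 7.1 applied to the geodesic ray from $z_0$ to $\zeta$ gives finite Euclidean length of $\wf(\gamma)$ and hence $|\wf - q|\to 0$ along $\gamma$, while the convexity bound keeps $|\wf - q|u_{\wf}$ above a positive constant. Since the arclength-speed of $\wf$ along $\gamma$ is $1/u_{\wf}^2$, this yields a differential inequality $|dL/ds|\le L^2/C^2$ for $L(s) = |\wf(\gamma(s)) - q|$ that, together with $L \to 0$, is incompatible with the finiteness of $\int 1/u_{\wf}^2\,ds$ along the ray.

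The main obstacle I anticipate is the boundary sub-case of (iii), where Lemma 7.1, the convexity of $u_{M_q\circ\wf}$, and the rate-of-vanishing of $|\wf - q|$ along the approaching geodesic must be combined carefully to produce the contradiction; (i) and (ii) reduce to a routine frame-and-algebra calculation once the tangent/normal decomposition of $q - w_0$ is written out.
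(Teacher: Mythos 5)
Your parts (i) and (ii) are essentially the paper's own argument in a different guise: the paper projects a neighborhood of $\wf(z_0)$ onto the tangent plane and quotes a planar fact about critical points of $e^{\tau}|q-z|^2$, while you do the same first-order computation directly in an adapted frame; both produce the same circle, lying in the plane spanned by the unit normal and the tangential direction of $\nabla\log u_{\wf}$, orthogonal to $\Sigma$ at $\wf(z_0)$, and (ii) is immediate either way. One bookkeeping remark: your own relation $A^2+B^2=cA$ with $c=e^{\sigma(z_0)}/|\nabla\log u_{\wf}(z_0)|$ gives radius $c/2$, so the quantity $r(z_0)$ in the statement is really the \emph{diameter} of this circle (the same factor appears if one tracks the paper's reduction, e.g.\ in the model bundle of circles through $z$ and $1/\bar z$); saying "the radius matches $r(z_0)$" glosses over this, though the normalization issue is in the statement, not in your geometry. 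The interior half of (iii) is also correct and identical to the paper's: convexity of $u_{M_q\circ\wf}$ along geodesics through $z_0$ plus the critical point there gives $u_{M_q\circ\wf}\ge |\wf(z_0)-q|\,u_{\wf}(z_0)>0$ on $\D$, contradicting the zero at $\wf^{-1}(q)$.

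The boundary sub-case of (iii), which you rightly flag as the delicate point, does not close as written. Along the geodesic ray $\gamma$ from $z_0$ to $\zeta$ with $\wf(\zeta)=q$, set $L(s)=|\wf(\gamma(s))-q|$ and $U(s)=L(s)u_{\wf}(\gamma(s))$. The facts you assemble are $U\ge C_0>0$, $|L'|\le u_{\wf}^{-2}\le L^2/C_0^2$, $\int_0^\infty u_{\wf}^{-2}\,ds<\infty$ (Lemma 7.1), and $L\to 0$. These are mutually consistent, so no contradiction follows: for instance $L(s)=(1+s)^{-1}$, $u_{\wf}(\gamma(s))=1+s$, $C_0\le 1$ satisfies all four on the (infinite, since $\mathbf{g}$ is complete) ray; the inequality $|L'|\le L^2/C_0^2$ only forbids $L$ from decaying faster than about $C_0^2/s$, which is compatible with both $L\to0$ and the finite length. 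What the convexity argument genuinely leaves open is exactly the scenario $U\equiv C_0$ on the whole ray: since $U$ is convex with $U'(0)=0$ one has $U'\ge 0$, and if $U'$ were ever positive then $\int U^{-2}\,ds<\infty$, so $M_q\circ\wf(\gamma)$ would have finite Euclidean length and could not tend to $\infty$, as it must because $\wf(\gamma(s))\to q$. Ruling out this constancy is the missing step; it is an equality analysis in the spirit of Section 6 and Lemmas 7.2--7.3 ($U$ constant forces $\S_1(\wf\circ\gamma)\equiv 0$, the image $M_q\circ\wf(\gamma)$ a straight ray, hence equality in (\ref{injectivity}) along $\gamma$), or equivalently a justification of the paper's terse assertion that $u_{M_q\circ\wf}$ actually tends to zero at $\zeta$ --- which is precisely what your inequalities fail to deliver. (To be fair, the paper's own proof of this sub-case is a single sentence and arguably elides the same point; but your proposed differential-inequality contradiction is not valid as stated.)
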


\begin{proof}
We show first the following basic fact. Let $D$ be a planar domain and let $e^{\tau}$ be a given positive function on $D$.
Then the
set of points $q\in\mathbb{R}^3$ for which $e^{\tau}|q-z|^2$ has a critical point at $z_0\in D$ is a circle orthogonal
to the plane passing through the
points $z_0$ and $z_0+1/\tau_z(z_0)$. Indeed, the critical point condition can be written in the form
$$\tau_{\bar{z}}(z_0)=\frac{p-z_0}{|q-z_0|^2} \, ,$$ where $p$ is the projection of $q$ onto the plane on which $D$ lies.
This last equation is readily seen to be equivalent to the condition
$$\left|q-z_0-\frac{1}{2\tau_z(z_0)}\right|^2=\left|\frac{1}{2\tau_z(z_0)}\right|^2 \, ,$$
which represents the claimed circle.

Since the statement in (i) involves only first order data, we may replace the minimal surface $\wf(\D)$ with the projection
onto the tangent plane at $\wf(z_0)$
of a neighborhood of $\wf(z_0)$ on the surface. This projection is planar domain $D$ as above, and the result follows after
considering $u_{M\circ\wf}$ as a function of the image point $w=\wf(z)$. This proves parts (i) and (ii).

To prove (iii) let $q\in C$, $q\neq \wf(z_0)$. Then
$u_{M\circ\wf}$ is convex and has a positive minimum point at $z_0$.
If $q$ were on $\Sigma$ or on its boundary, then $u_{M\circ\wf}$ would tend there to zero, a contradiction.

\end{proof}

For $z\in\D$ and $w=\wf(z)$,  the circle described in the lemma will be denoted by $C_w$. Through the following lemmas it will be shown that
the family of circles $\{C_w\}_{w\in\Sigma}$ constitutes a circle bundle of $\mathbb{R}^3$ with base $\Sigma$. This bundle will be denoted simply by $\mathfrak{C}$.
An important property is that, in the presence of a critical point in $\D$, the radius $r(z)$ of the circle $C_{\wf(z)}$ tends to zero as $z$ approaches $\partial\D$.
Indeed, we claim that for all points $z$ away from the unique critical point of $u_{\wf}$ we will have
$$ e^{-\rho}|\nabla u_{\wf}|\geq a>0 \, ,$$
for some absolute constant $a$. This estimate is direct consequence of the lower bound by linear growth $u_{\wf}$ along geodesics rays emanating from
the critical point. From this it follows that
\begin{equation}r(z)\leq \frac{1}{au_{\wf}} \, ,
\end{equation}
which tends to zero at the boundary.

\begin{lemma} Let $\wf$ satisfy (\ref{injectivity}). If the condition (UCP) holds then $\Sigma$ is bounded if and only if
$u_{\wf}$ has a critical point in $\D$.
\end{lemma}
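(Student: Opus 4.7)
The plan is to prove both implications by combining the convexity of $u_{\wf}$ along $\mathbf{g}$-geodesics (Lemma 5.1), the finite-length criterion of Lemma 7.1, and completeness together with Hopf--Rinow.

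For the direction $(\Leftarrow)$, assume $u_{\wf}$ has a critical point $z_0$, necessarily unique by (UCP). The linear growth estimate recorded just above the lemma provides a uniform constant $c>0$ such that $u_{\wf}(\gamma(s)) \ge c(1+s)$ along every unit-speed $\mathbf{g}$-geodesic ray $\gamma$ from $z_0$. Since $|d\wf/ds| = u_{\wf}^{-2}$ along such a ray, integration gives $|\wf(\gamma(s)) - \wf(z_0)| \le c^{-2}$ uniformly in $s$ and in the initial direction. Completeness and (BPJ) let me reach every point of $\D$ along such a ray, so $\Sigma$ is contained in the Euclidean ball of radius $c^{-2}$ about $\wf(z_0)$.

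For the converse $(\Rightarrow)$, if $\Sigma$ is bounded then the continuous extension of $\wf$ to $\overline{\D}$ from Theorem 5.3 takes only finite values, and Lemma 7.1 yields $\int_{0}^{\infty} u_{\wf}(\gamma(s))^{-2}\,ds < \infty$ along every geodesic ray $\gamma$. Convexity of $u_{\wf}$ along $\gamma$ prevents $u_{\wf}$ from remaining bounded on the ray (that would force the integral to diverge), so $u_{\wf}(\gamma(s)) \to \infty$ along every ray. Assume now for contradiction that $u_{\wf}$ has no critical point in $\D$, set $m = \inf_{\D} u_{\wf}$, pick a minimizing sequence $\{z_n\}$, and fix a base point $z_0$. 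If $d_{\mathbf{g}}(z_0, z_n)$ remains bounded, Hopf--Rinow produces a cluster point $z_* \in \D$ at which the infimum is attained, giving an interior critical point. Otherwise $d_{\mathbf{g}}(z_0, z_n) \to \infty$; the unit-speed geodesic segments $\gamma_n$ joining $z_0$ to $z_n$ satisfy $u_{\wf}(\gamma_n(s)) \le \max(u_{\wf}(z_0), u_{\wf}(z_n)) \le C$ uniformly on their length, by convexity. Extracting a subsequence whose initial directions $\gamma_n'(0)$ converge, continuous dependence of the geodesic flow on initial data gives a limiting geodesic ray $\gamma$ along which $u_{\wf}(\gamma(s)) \le C$ for all $s \ge 0$, contradicting $u_{\wf}(\gamma(s)) \to \infty$.

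The principal obstacle is exactly the case $d_{\mathbf{g}}(z_0,z_n) \to \infty$ in the converse direction: one must transport the uniform convex upper bound for $u_{\wf}$ along a family of finite segments to a genuine geodesic ray. This is where the interplay between convexity (which propagates endpoint bounds to the entire segment) and continuous dependence of the $\mathbf{g}$-geodesic flow (enabled by completeness) does the work.
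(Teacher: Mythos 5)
Your proof is correct and follows essentially the same route as the paper: convexity of $u_{\wf}$ along geodesics (Lemma 5.1) together with the uniform linear growth from the unique critical point yields finite image length along every ray and hence boundedness, while conversely Lemma 7.1 forces $u_{\wf}\to\infty$ along every geodesic ray, so the infimum must be attained at an interior critical point; you simply make explicit, via the minimizing sequence and the limiting ray, the compactness step that the paper's "therefore $u_{\wf}$ attains an interior minimum" leaves implicit. One small remark: reaching every point of $\D$ by a geodesic from $z_0$ follows from completeness (Hopf--Rinow), not from (BPJ).
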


\begin{proof}
Suppose first that $u_{\wf}$ has a critical point, say $z_0\in\D$. The (UCP) property implies that
$u_{\wf}$ must be strictly increasing along any geodesic ray $\gamma$ starting at $z_0$. Hence, as in (5.2), we conclude that $\wf$ remains bounded
on $\gamma$, with bounds depending on the constants $b,c$. Since these constant can be chosen independently on the direction of $\gamma$ at $z_0$,
we see that $\wf$ is bounded.

Suppose now that $\Sigma$ is bounded. Then Lemma 7.1 shows that $\wf(\gamma)$ has finite length for any geodesic ray $\gamma$ reaching $\partial\D$.
This implies that along any such ray starting, say, at the origin, $u_{\wf}$ must become eventually increasing. Therefore, $u_{\wf}$ attains an interior
minimum, proving the existence of the desired critical point.

\end{proof}

The lemma can equally well be stated for any M\"obius shift $M\circ\wf$, in particular, if $M(\Sigma)$ is bounded then
$u_{M\circ\wf}$ must have a critical point in $\D$. We can now show that $\mathfrak{C}$ is
a circle bundle of $\mathbb{R}^3$ with base $\Sigma$. Since the property (i) of such a bundle is met by Lemma 7.4,
we are to show the properties (ii) and (iii) of a circle bundle.

For (ii), let $C_{w_1}, C_{w_2}$ be two such circles having a common point $q$. If $q\in\Sigma$ then $q=w_1=w_2$, hence
the circles are the same. If $q\notin\Sigma$ then $u_{M_q\circ\wf}$ has a critical
point at $z_1=\wf^{-1}(w_1)$ and at $z_2=\wf^{-1}(w_2)$. The (UCP) property implies that $z_1=z_2$, hence
$C_{w_1}=C_{w_2}$.

For (iii), consider a point $q\notin\overline{\Sigma}$. Then  $M_q(q)=\infty\notin M_q(\overline{\Sigma})$, meaning that
$M_q(\Sigma)$ must be bounded. Lemma 7.5 implies that  $u_{M_q\circ\wf}$ has a critical point in $\D$ and therefore $q\in C_w$
for some $w=\wf(z)$.

We make two final observations before setting up the extension. First, it follows from part (i) of Lemma 7.4 that $C_w\in\mathfrak{C}$ becomes a line
exactly when $\wf^{-1}(w)$ is the unique critical point of $u_{\wf}$. Secondly, that the bundle $\mathfrak{C}$ with base $\Sigma=\wf(\D)$ is
{\it conformally natural}, in the following sense. For a fixed M\"obius mapping $M_0$, Lemma 7.4 produces a bundle $\mathfrak{D}$ over
the base $M_0(\Sigma)$, which is not difficult to see coincides with the collection of circles $M_0(C)$ for $C\in\mathfrak{C}$.
\smallskip

Let now $\wf$ be a lift satisfying (\ref{injectivity}) for which (UCP) holds. It is natural to consider a spatial extension
of the lift by matching the circles in the bundles $\mathfrak{C}_0$ and $\mathfrak{C}$
through the respective base points. For the actual pointwise correspondence between $C_z$ and $C_{\wf(z)}$ we use
an adequate affine mapping $D_z$ with $D_z(z)=\wf(z)$. With this, let $\E:\mathbb{R}^3\rightarrow\mathbb{R}^3$
be defined by
\begin{equation}
\E(p)=\left\{\begin{array}{lll} \wf(z) & , & p=z\in\overline{\D} \\ & &\hspace{.8in} \, . \\  D_z(p) & , & p\in C_{\wf(z)}    \end{array} \right .
\end{equation}
The mapping $\E$ is injective on $\overline{\D}$ because $\wf$ is non-extremal. The properties of circle bundles guarantees that $\E$
is injective elsewhere and also onto. It is also readily seen to be continuous at all points $p\notin\partial\D$, whereas the continutity
on $\partial\D$ is guaranteed by (7.1) when $\Sigma$ is bounded. Since the bundle $\mathfrak{C}$ transforms to the corresponding bundle
over the  base $M\circ\wf$ for any M\"obius shift, we conclude that the extension $\E$ is continuous in the spherical metric whether
$\Sigma$ is bounded or not.

\smallskip

As a final comment, we mention that $\E$, when restricted to points $p\in\mathbb{C}$ does give back the Ahlfors-Weill extension when
$\mathbf{g}$ is the Poincar\'e metric \cite{AW}.

\bibliographystyle{plain}

\bi
\noi
{\small Facultad de Matem\'aticas, Pontificia Universidad Cat\'olica de Chile,
Casilla 306, Santiago 22, Chile,\, \email{mchuaqui@mat.puc.cl}

\end{document}